\def\stacksum#1#2{{\stackrel{{\scriptstyle #1}}
{{\scriptstyle #2}}}}
\newcommand{\ideal}[1]{\mathfrak{{#1}}}
\newcommand{\mods}[1]{\,(\mathrm{mod}\,{#1})}
\newcommand{\pit}[1]{\beta_{{#1}}}
\renewcommand{\leq}{\leqslant}
\renewcommand{\geq}{\geqslant}
\DeclareMathOperator{\SL}{SL}
\DeclareMathOperator{\GL}{GL}
\DeclareMathOperator{\End}{End}
\DeclareMathOperator{\Gal}{Gal}
\DeclareMathOperator{\Aut}{Aut}
\DeclareMathOperator{\Sp}{Sp}
\newcommand{\field}[1]{\mathbb{#1}}
\newcommand{\Q}{\field{Q}}
\newcommand{\Z}{\field{Z}}
\newcommand{\F}{\field{F}}
\newcommand{\C}{\field{C}}
\renewcommand{\P}{\field{P}}
\newcommand{\ra}{\to}
\newcommand{\MM}{\mathcal{M}}
\newcommand{\set}[1]{\{#1\}}
\newcommand{\beq}{\begin{displaymath}}
\newcommand{\eeq}{\end{displaymath}}
\newcommand{\beqn}{\begin{equation}}
\newcommand{\eeqn}{\end{equation}}
\newcommand{\charac}{\mathrm{char}}
\newcommand{\Fqbar}{\overline{\F}_q}
\theoremstyle{plain}
\newtheorem{thm}{Theorem}
\newtheorem{prop}[thm]{Proposition}
\newtheorem{cor}[thm]{Corollary}
\newtheorem{lem}[thm]{Lemma}
\newtheorem*{intro}{Theorem}
\theoremstyle{definition}
\newtheorem{exmp}[thm]{Example}
\newtheorem{que}[thm]{Question}
\theoremstyle{remark}
\newtheorem{rem}[thm]{Remark}
\begin{document}
\title[Non-simple abelian varieties]{Non-simple abelian varieties in a
  family: geometric and analytic approaches}
\author[J. Ellenberg]{Jordan S. Ellenberg}
\address{Department of Mathematics \\ University of Wisconsin \\ 480 Lincoln Drive \\
  Madison, WI 53705 USA} \email{ellenber@math.wisc.edu}
\author[C. Elsholtz]{Christian Elsholtz}
\address{Department of Mathematics\\ Royal Holloway\\
  University of London\\ Egham\\ TW20 0EX Surrey\\ UK}
\email{christian.elsholtz@rhul.ac.uk} \author[C. Hall]{Chris Hall}
\address{Department of Mathematics, University of Michigan at Ann
  Arbor\\ Michigan, USA} \email{hallcj@umich.edu}
\author[E. Kowalski]{Emmanuel Kowalski}
\address{ETH Z\"urich -- D-MATH\\
  R\"amistrasse 101\\
  8092 Z\"urich\\
  Switzerland} \email{kowalski@math.ethz.ch}

\subjclass[2000]{Primary 11G10; Secondary 11N35, 14K15, 14D05}

\begin{abstract}
  Let $A_t$ be a family of abelian varieties over a number field $k$
  parametrized by a rational coordinate $t$, and suppose the generic
  fiber of $A_t$ is geometrically simple.  For example, we may take
  $A_t$ to be the Jacobian of the hyperelliptic curve $y^2 =
  f(x)(x-t)$ for some polynomial $f$.  We give two upper bounds for
  the number of $t \in k$ of height at most $B$ such that the fiber
  $A_t$ is geometrically non-simple.  One bound comes from arithmetic
  geometry, and shows that there are only {\em finitely} many such
  $t$; but one has very little control over how this finite number
  varies as $f$ changes.  Another bound, from analytic number theory,
  shows that the number of geometrically non-simple fibers grows quite
  slowly with $B$; this bound, by contrast with the arithmetic one, is
  effective, and is uniform in the coefficients of $f$.  We hope that
  the paper, besides proving the particular theorems we address, will
  serve as a good example of the strengths and weaknesses of the two
  complementary approaches.
\end{abstract}

\maketitle

\section*{Introduction}

Given an algebraic family $\set{A_t}_{t \in \Q}$ of abelian varieties
parametrized by a rational number $t$ whose generic fiber has a
certain property, it is natural to ask what one can say about the set
of $t \in \Q$ such that $A_t$ has the same property.  One expects that in many cases
this set will be ``large'' in some sense, even if the property in
question is not a straightforward ``algebraic'' condition.  
\par
We consider in this context the property of geometric simplicity,
which can be approached from several directions. In fact, in some
sense, the main goal of this paper is to use this example to
illustrate and compare different approaches, via arithmetic geometry
and via analytic number theory.  It turns out that neither is clearly
preferable to the other, each method showing characteristic strengths
and weaknesses, which we will try to emphasize. In this spirit, and
for the sake of clarity, we do not always pursue the strongest
possible conclusions.

More precisely, we will discuss the following two theorems, each of
which is a special case of a more general theorem proved in the main
body of the paper. Both concern the family of Jacobians $A_t$ of
hyperelliptic curves defined by affine equations
$$
y^2=f(x)(x-t)
$$
for some squarefree polynomial $f\in \Z[X]$ of degree $2g$, $g\geq
1$. For $t\in \Q$ written $t=a/b$ with coprime integers $a$ and $b$,
let $H(t)=\max(|a|,|b|)$ be the height of $t$. Let then $S(B)$ denote
the set of $t\in \Q$ with $H(t)\leq B$ such that $A_t$ is \emph{not}
geometrically simple.

 \begin{intro}[Arithmetic geometry]
   There exists a constant $C(f)$, depending on $f$, such that
\begin{equation}\label{eq:bound-geometric}
|S(B)|\leq C(f)
\end{equation}
for all $B\geq 1$. In other words, there are only finitely many $t$
for which $A_t$ is not geometrically simple.
 \end{intro}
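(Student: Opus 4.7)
The argument has two essential ingredients: a monodromy computation to establish simplicity of the geometric generic fiber, and an arithmetic finiteness result controlling the exceptional (non-simple) specializations.

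First I would verify that $A_\eta \otimes_{\Q(t)} \overline{\Q(t)}$ is simple. Let $\Sigma \subset \A^1$ be the locus at which $f(x)(x-t)$ fails to be squarefree. A classical Picard--Lefschetz computation for hyperelliptic pencils shows that the action of $\pi_1(\A^1_{\Qbar} \setminus \Sigma)$ on the first $\ell$-adic cohomology of $A_t$ has image Zariski dense in $\Sp_{2g}(\Ql)$: each branch point contributes a symplectic transvection (a vanishing cycle), and these transvections together generate an open subgroup of $\Sp_{2g}(\Z_\ell)$. Since big monodromy forces the generic endomorphism algebra to commute with $\Sp_{2g}$, one gets $\End(A_\eta \otimes \overline{\Q(t)}) = \Z$, and in particular the geometric generic fiber is simple.

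Second, I would appeal to an arithmetic finiteness theorem. Non-simplicity of $A_{t_0} \otimes \Qbar$ corresponds to a non-trivial idempotent in $\End(A_{t_0} \otimes \Qbar) \otimes \Q$, equivalently to a proper abelian subvariety of some dimension $d$ with $0 < d < g$. The ``non-simple locus'' in the moduli space $\mathcal{A}_g$ decomposes as a countable union of proper closed subvarieties, one for each possible isogeny-decomposition type. By the first step, the image of $\A^1 \setminus \Sigma$ in $\mathcal{A}_g$ is Hodge-generic, so meets each such subvariety in only finitely many points. To bound the union of contributions over all isogeny types, one uses that $A_{t_0}$ is defined over $\Q$ with conductor controlled in terms of the coefficients of $f(x)(x-t_0)$; combined with Faltings--Shafarevich finiteness (or with explicit isogeny bounds \`a la Masser--W\"ustholz), this restricts the possible simple factors to finitely many isogeny classes, each producing only finitely many $t_0$.

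The hard part is the second step: this finiteness is inherently non-effective and offers no quantitative control on the constant $C(f)$ as $f$ varies. That ineffectiveness, already present in the Faltings-type inputs, is precisely the weakness of the arithmetic method that motivates the complementary analytic approach developed in the remainder of the paper.
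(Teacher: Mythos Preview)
Your first step is essentially right and aligns with the paper: the family has big geometric monodromy, and this forces the geometric generic fiber to have $\End = \Z$.

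Your second step, however, has a genuine gap. You correctly observe that the non-simple locus in $\mathcal{A}_g$ is a \emph{countable} union of proper subvarieties, and that a Hodge-generic curve meets each one in finitely many points. But you then need to control the \emph{union} over all isogeny types, and your proposed mechanism---bounding the conductor of $A_{t_0}$ and invoking Faltings--Shafarevich or Masser--W\"ustholz---does not work. The conductor of $A_{t_0}$ depends on the discriminant of $f(x)(x-t_0)$, which varies with $t_0$; there is no fixed finite set of bad primes as $t_0$ ranges over $\Q$, so Shafarevich finiteness gives nothing uniform. Masser--W\"ustholz bounds are likewise in terms of the Faltings height of $A_{t_0}$, which again depends on $t_0$. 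Indeed, the paper explicitly notes that for $g=2$ the non-simple locus is a countable union of \emph{divisors}, and a generic curve in $\mathcal{A}_2$ meets it in infinitely many complex points; your geometric intersection argument cannot by itself distinguish rational from complex specializations.

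The paper's route around this is quite different. Rather than working with $\ell$-adic or Hodge-theoretic monodromy, it works with mod-$\ell$ monodromy for a \emph{single fixed} prime $\ell$. If $A_{t_0}$ fails to have big monodromy mod $\ell$, then $t_0$ lifts to a rational point on a quotient curve $X/H$, where $X$ is the $\Sp(2g,\F_\ell)$-cover of the $t$-line trivializing $A[\ell]$ and $H$ is a proper subgroup of $\Sp(2g,\F_\ell)$. The crucial point is that $\Sp(2g,\F_\ell)$ is a \emph{finite} group, so there are only finitely many such $H$ and hence only finitely many curves $X/H$. A deep group-theoretic result of Liebeck--Saxl and Guralnick then forces each $X/H$ to have genus $\geq 2$ once $\ell$ is large enough, and Faltings' theorem applied to this finite list of curves gives the result. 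The passage from a countably infinite family of conditions to a finite one is exactly what your argument is missing.
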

 
 This is a special case of Theorem~\ref{th:main} in
 Section~\ref{sec:geoI} and is elaborated on in Example~\ref{ex:main} in
 Section~\ref{sec:geoII}.
 
\begin{intro}[Analytic number theory]
  There exist absolute constants $C\geq 0$ and $D\geq 1$ such that we
  have
\begin{equation}\label{eq:bound-analytic}
|S(B)| \leq C(g^2D(\log B))^{11g^2}
\end{equation}
for all $B\geq 1$.
\end{intro}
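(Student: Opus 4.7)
The plan is to run a large-sieve argument of Kowalski type, using Frobenius elements of the family $\{A_t\}$ modulo auxiliary primes as the sieving data, together with a big-monodromy theorem for hyperelliptic Jacobians. The crucial algebraic input is that if $A_t$ is geometrically non-simple, there is an isogeny $A_t \sim B_t \times C_t$ over $\Qbar$ with $\dim B_t, \dim C_t > 0$. Hence at every prime $p$ of good reduction the characteristic polynomial of Frobenius factors as $P_{A_t,p}(T) = P_{B_t,p}(T)\,P_{C_t,p}(T)$ in $\Z[T]$, and so for every auxiliary prime $\ell \neq p$ the element $\bar\rho_{t,\ell}(\Frob_p) \in \Sp_{2g}(\F_\ell)$ lies in the subset $R_\ell$ of elements with reducible characteristic polynomial modulo $\ell$.

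I would then invoke two standard ingredients. First, by a theorem of C.~Hall (refining J.-K.~Yu), the geometric monodromy of the family acting on $A_t[\ell]$ equals $\Sp_{2g}(\F_\ell)$ for every $\ell$ outside a finite exceptional set that can be taken \emph{independent of the coefficients of $f$}. Second, Deligne's equidistribution theorem applied to this family gives that for each such $\ell$ and each prime $q$ of good reduction, the Frobenius classes $\bar\rho_{\bar t,\ell}(\Frob_q)$ equidistribute in $\Sp_{2g}(\F_\ell)$ with explicit error $O(\ell^{O(g^2)} q^{-1/2})$. A Chavdarov-style combinatorial count then yields $|R_\ell|/|\Sp_{2g}(\F_\ell)| \leq 1 - c/g^{O(1)}$, and therefore $\{t \bmod q : t \in S(B)\}$ is contained in a subset $\Omega_q \subset \F_q$ whose density is bounded away from $1$ with a gap of order $1/g^{O(1)}$.

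Applying the large sieve in the quantitative form of Kowalski (\emph{The Large Sieve and its Applications}, Ch.~3), with sieving moduli $q$ in a range chosen in terms of $B$, yields a first bound of the shape $|S(B)| \ll_g B/\log B$. The polylogarithmic improvement to $(g^2D\log B)^{11g^2}$ requires exploiting that the reducibility conditions at different primes must be \emph{globally compatible} with one and the same isogeny decomposition of $A_t$: combined with Masser--W\"ustholz-type height bounds on isogeny factors of $A_t$ (whose Faltings heights are $O(\log B)$) and an iterated sieve, this reduces the count to a polylogarithmic number of possible isogeny decompositions, and for each to a polylogarithmic number of corresponding $t$. The main technical obstacle is precisely this upgrade from the linear-in-$B$ single-prime sieve bound to the polylogarithmic regime, together with the uniformity in the coefficients of $f$; both are controlled via Hall's uniform monodromy statement, and the tracking through of the sieve and moduli-space dimensions produces the exponent $11g^2$.
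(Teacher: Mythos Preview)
Your proposal has the right starting point---reduce modulo primes $q$ and observe that if $A_t$ is geometrically non-simple then so is its reduction $A_{\bar t}/\F_q$---but the sieve architecture you describe has a genuine gap at the step where you pass from a linear-in-$B$ bound to a polylogarithmic one.

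The problem is in the strength of your bound on $\Omega_q=\{\bar t\in\F_q : A_{\bar t}\text{ not geometrically simple}\}$. Equidistribution for a single auxiliary $\ell$ together with a Chavdarov count only gives $|\Omega_q|/q\leq 1-c/g^{O(1)}+O(\ell^{O(g^2)}q^{-1/2})$, i.e.\ density bounded away from $1$. Feeding this into the large sieve over $t\in\Q$ yields at best $|S(B)|\ll B^2/(\log B)^A$, and no iteration of that argument will reach a polylogarithmic bound. Your proposed rescue via Masser--W\"ustholz isogeny estimates is not substantiated: those bounds control the \emph{degree} of a minimal isogeny splitting $A_t$, but they do not give a finite list of global isogeny types that one could then sieve over, and even if they did it is unclear how that converts into a sieve condition on $t\bmod q$.

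What the paper actually does is a two-layer sieve of a different shape. The \emph{inner} layer is the sieve for Frobenius over $\F_q$ itself (varying $\ell$, not fixing it): this is a large sieve on the parameter $\bar t\in\F_q$ and produces a \emph{power-saving} bound $|\Omega_q|\ll g^2 q^{1-1/\gamma}(\log q)$ with $\gamma=4g^2+2g+4$ (Proposition~\ref{pr:frobsieve}). The \emph{outer} layer is then not the large sieve but Gallagher's \emph{larger} sieve (Proposition~\ref{pr:larger}), which is tailored to the situation where the number of allowed residues $\nu(\ideal{p})\leq C(N\ideal{p})^{1-1/\gamma}(\log N\ideal{p})$ is a vanishing fraction of $N\ideal{p}$. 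That combination is exactly what collapses the bound to $(g^2D\log B)^{O(g^2)}$; the exponent $11g^2$ is just a clean majorant for $2\gamma$. So the missing idea is twofold: (i) sieve \emph{inside} $\F_q$ over many $\ell$ to make $\Omega_q$ small in the power-saving sense, and (ii) replace the outer large sieve by the larger sieve.
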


This is a special case of Theorem~\ref{th:analytic} in
Section~\ref{sec:analytic}, where we have simplified the bound by
worsening it somewhat. (For readers interested in this analytic
approach but who are not familiar with abelian varieties, we have
summarized enough information to understand the basic problem in an
Appendix, which they may want to read now before starting Section~\ref{sec:analytic}).
\par
\medskip
\par
The first theorem may initially appear much stronger.  But note that
in~(\ref{eq:bound-geometric}), we have no idea about the actual value
of $C(f)$, in particular about how it may vary with $f$, whereas in
the second theorem, the bound~(\ref{eq:bound-analytic}) is
\emph{effective} in terms of $f$. In particular this means we can
deduce bounds for similar problems involving families with more than
one parameter, e.g., for Jacobians of
$$
y^2=f(x)(x-t)(x-v),
$$
for fixed square-free $f$ of degree $2g-1$ and parameters $t$, $v\in
k$.  One can also deduce from~(\ref{eq:bound-analytic}) some upper
bound for the smallest height of a $t$ such that $A_t$ is
geometrically simple, namely there exists some $t$ of height $\leq B$
for which $A_t$ is geometrically simple, where
$$
B=C'(D'g^4)^{11g^2}
$$
for some constants $C'>0$, $D'\geq 1$ (computable in terms of $C$ and
$D$).
% However, another
% analytic technique turns out to be even more efficient for this
% problem. We will prove that, for some constant $C_1>0$, there exists a
% $t\in \Q$ \emph{not} in $S(B)$ when
% $$
% B\geq C_1(g\log 2g)^4
% $$
% (a special case of Theorem~\ref{th:smallest}).
\par
The situation may be compared with the problem of counting rational
points on a plane curve $X$ of genus $\geq 2$. The theorem of Faltings
shows that this set of points is finite, but it gives no effective
bound for the heights of the solutions, and only estimates depending
badly on $X$ for the number of points. On the other hand, the method
of Heath-Brown in \cite{heat:2d} yields a completely explicit bound,
depending only on the degree of $X$, for the number of points on $X$
of height at most $B$.
\par
\medskip
\par
Going further with the analogy, we may notice that Caporaso, Harris,
and Mazur~\cite{capo:chm} have shown that if a certain conjecture of
Lang~\cite{lang} holds, then there is a bound \emph{depending only on
  $g$} for the number of rational points on a curve of genus $g$ over
$\Q$.  This suggests the following rather speculative question about
the topic of the current paper:

\begin{que}
Is there an absolute constant $C$ such that, for any
squarefree polynomial $f \in \Z[x]$, there are at most $C$ rational
numbers $t$ such that the Jacobian of $y^2 = f(x)(x-t)$ is
geometrically non-simple? 
\end{que}

If the question is relaxed to allow $C$ to depend on the degree of $f$
(i.e., the genus of the hyperelliptic curves under consideration),
then Lang's conjecture implies an affirmative answer: as we shall see,
the proof of Theorem~\ref{th:main} is based on showing that $S(B)$
maps injectively to the set of rational points on one of a finite set
of curves of sufficiently large genus, where the number and genera of
these curves are bounded in terms of $\deg(f)$.

One can be even more ambitious and ask the following purely geometric
question:

\begin{que}
  Is there an absolute constant $C$ such that, for any squarefree
  polynomial $f \in \C[x]$ of degree at least $6$, there are at most
  $C$ complex numbers $t$ such that the Jacobian of $y^2 = f(x)(x-t)$
  is not simple?
\end{que}

Geometrically, we are asking whether there is an absolute bound on the
number of complex intersection points between certain rational curves
in $\MM_g$ and the sublocus of $\MM_g$ parametrizing curves whose
Jacobians are non-simple.  The difficulty arises from the fact that
the non-simple locus is a countable union of proper subvarieties, so
it is certainly not obvious a priori that there are finitely many $t
\in \C$ for which $A_t$ is non-simple.  Indeed, when $g=2$, the
non-simple locus is a countable union of divisors, so a typical curve
intersects this locus infinitely many times; this is the reason we
require $\deg(f) \geq 6$.
\par
\medskip
\par
\textbf{Acknowledgments.} We wish to thank F. Voloch for many helpful
conversations. The first-named author's work was partially supported
by NSF-CAREER Grant DMS-0448750 and a Sloan Research Fellowship.
\par
\medskip
\par

\textbf{Notation.} As usual, $|X|$ denotes the cardinality of a set,
and $\F_q$ is a field with $q$ elements. For a number field $k$,
$\Z_k$ denotes its ring of integers, and for a prime ideal
$\ideal{p}\subset \Z_k$, $\F_{\ideal{p}}$ is the residue field
$\Z_k/\ideal{p}$.
\par
By $f\ll g$ for $x\in X$, or $f=O(g)$ for $x\in X$, where $X$ is an
arbitrary set on which $f$ is defined, we mean synonymously that there
exists a constant $C\geq 0$ such that $|f(x)|\leq Cg(x)$ for all $x\in
X$. The ``implied constant'' refers to any value of $C$ for which this
holds. It may depend on the set $X$, which is usually specified
explicitly, or clearly determined by the context.

\section{Methods from arithmetic geometry, I}
\label{sec:geoI}

In this section and the next we consider a field $k$ which is finitely
generated over the prime field, e.g., $k$ could be a number field or
a function field over a finite field.\footnote{\ These will be the
  only fields arising in the analytic section, and the reader can
  think of these as the most important.} We also assume that the
characteristic of $k$, if positive, is not equal to $2$. 
\par
The first conditions arise because we need to know that the following
mild weakening of Mordell's conjecture holds for $k$:

\begin{thm}\label{thm::mordell}
With $k$ as above, there is a constant $g_1(k)$ such that for any smooth
projective curve $C/k$ of genus $g> g_1(k)$, the set $C(k)$ of $k$-rational
points on $C$ is finite. 
\end{thm}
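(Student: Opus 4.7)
The plan is to derive the statement from the known forms of the Mordell conjecture due to Faltings and Samuel, treating the two characteristics of $k$ separately.

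If $\charac(k) = 0$, then $k$ is finitely generated over $\Q$, and Faltings's theorem gives $|C(k)| < \infty$ for every smooth projective curve $C/k$ of genus $g \geq 2$. This is Faltings's original theorem when $k$ is a number field. To extend it to any finitely generated extension, I would spread $C$ out to a smooth proper family over a finitely generated $\Z$-subalgebra $R \subset k$ with fraction field $k$, and then specialize at a closed point of $\Spec R$; a hypothetical infinite set of $k$-rational points would produce infinitely many points over a number-field fiber, contradicting Faltings. Hence $g_1(k) = 1$ suffices when $\charac(k) = 0$.

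If $\charac(k) = p > 2$, then $k$ is the function field of an integral variety $X_0$ over some $\F_q$. The function-field Mordell conjecture, proved by Grauert and Manin in characteristic zero and by Samuel in positive characteristic, gives $|C(k)| < \infty$ for every \emph{non-isotrivial} smooth projective curve $C/k$ of genus $\geq 2$. The principal obstacle is the isotrivial case: if $C$ becomes isomorphic, after base change, to $C_0 \times_{\F_q} \bar k$ for some curve $C_0/\F_q$, then $k$-rational points of $C$ correspond to $\F_q$-morphisms $X_0 \to C_0$. However, classical Riemann--Hurwitz bounds (when $X_0$ is a curve) or de Franchis--Severi-type finiteness results in higher dimension force every such morphism to be constant once $g(C_0)$ exceeds a threshold depending only on the geometry of $X_0$; the corresponding $k$-points then form the finite set $C_0(\F_q)$. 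Taking $g_1(k)$ to be the maximum of this threshold and $2$ yields the theorem.

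Letting $g_1(k)$ depend on $k$, rather than insisting on the sharp value $1$, is exactly the ``mild weakening'' of Mordell referred to by the authors, and it is just enough flexibility to absorb the extra geometric bounds needed in the isotrivial positive-characteristic case.
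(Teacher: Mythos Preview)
Your overall strategy matches the paper's: separate by characteristic, invoke the function-field and number-field Mordell theorems, and in positive characteristic isolate the isotrivial case as the place where $g_1(k)$ may need to exceed $1$. Two of your steps, however, are not justified as written.

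In characteristic zero, the spreading-out argument is too naive. Given infinitely many $P_i\in C(k)$, each extends to a section over some open $U_i\subset\Spec R$, but when $\dim\Spec R>1$ the intersection $\bigcap_i U_i$ need not contain any closed point, and even over a common closed point the specialized sections may collapse to finitely many values; nothing you have said rules this out. The paper instead splits according to whether $C$ descends to $\Qbar$: if not, Manin--Grauert (function-field Mordell in characteristic zero) applies; if so, the Kodaira--Parshin argument of Martin-Deschamps reduces the question to Faltings over a number field. You could alternatively simply cite Mordell for finitely generated extensions of $\Q$ as a known theorem, but the specialization sketch you wrote does not establish it.

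In positive characteristic, invoking ``de Franchis--Severi-type finiteness'' is hazardous: the finiteness of \emph{maps} $X_0\to C_0$ fails outright in characteristic $p$, since composing any one non-constant map with iterates of Frobenius on $C_0$ produces infinitely many (the paper makes exactly this observation). What you actually need, and what does survive, is a bound on the genus of the target. The paper supplies this explicitly: after factoring off the purely inseparable part so that $f$ is separable, the pullback $f^*:H^0(C_0,\Omega^1)\hookrightarrow H^0(X_0,\Omega^1)$ is injective, hence $g(C_0)\le\dim H^0(X_0\times_{\F_q}\Fqbar,\Omega^1)$, and this value is taken as $g_1(k)$. That is the precise content behind your ``threshold depending only on the geometry of $X_0$''; it is worth stating directly rather than gesturing at de Franchis--Severi, whose standard formulation is false here.
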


\begin{proof}
  At a minimum we must have $g\geq 2$, and if $\charac(k)=0$, then we
  may take $g_1(k)=2$.  If $C$ is not defined over an algebraic
  closure of the prime field of $k$, then this is a combination of
  results of Manin--Grauert \cite{manin}, \cite{grauert} (for
  $\charac(k)=0$) and Samuel \cite{samuel} (for $\charac(k)>0$).  If
  $\charac(k)=0$ and $C$ is defined over the algebraic closure of
  $\Q$, then the argument in the corollary of Theorem 1 of
  \cite{deschamps} reduces this to the celebrated theorem of Faltings
  \cite{faltings}.  The case which can force us to take $g_1(k)>2$ is
  when $k=\F_q(X)$ for a smooth projective variety $X/\F_q$ and $C$ is
  defined over $\F_q$.  If $\F_q$ is algebraically closed in $k$, then
  elements of the complement $C(k)-C(\F_q)$ correspond to dominant
  maps $X\to C$ and repeated composition with the Frobenius $C\to C$
  gives rise to an infinite subset of $C(k)$.  However, the following
  proposition shows if we take $g_1(k)=\dim H^0(X\times_{\F_q}\Fqbar,\Omega^1)$, there
  are no such elements, hence $C(k)=C(\F_q)$ is finite.
\end{proof}

\begin{prop}
  Let $Y/\Fqbar$ be a smooth projective curve of genus $g$. For any
  dominant map $f:X\to Y$ where $X/\Fqbar$ is a smooth projective variety, we
  have $g\leq \dim H^0(X,\Omega^1)$.
\end{prop}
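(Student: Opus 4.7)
The plan is to use the Albanese variety of $X$ to relate the genus of $Y$ to $\dim H^0(X, \Omega^1)$. Fix a base point $x_0 \in X$ and set $y_0 = f(x_0)$, then let $a_X \colon X \to \mathrm{Alb}(X)$ and $j \colon Y \to \Jac(Y)$ be the Abel--Jacobi maps, normalized to send the base points to the origin. By the universal property of the Albanese, the composition $j \circ f \colon X \to \Jac(Y)$ factors as $\phi \circ a_X$ for some homomorphism of abelian varieties $\phi \colon \mathrm{Alb}(X) \to \Jac(Y)$.

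First I would show that $\phi$ is surjective. Since $f$ is dominant, $f(X)$ is dense in $Y$, so $\phi(a_X(X)) = j(f(X))$ is dense in $j(Y)$. The image $\phi(\mathrm{Alb}(X))$ is a closed subgroup of $\Jac(Y)$ containing $j(Y)$; because $j(Y)$ generates $\Jac(Y)$ as an algebraic group (a classical fact for $g \geq 1$, and trivial when $g = 0$), $\phi$ must be surjective, whence $\dim \mathrm{Alb}(X) \geq \dim \Jac(Y) = g$.

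Next I would appeal to the injectivity of the pullback $a_X^* \colon H^0(\mathrm{Alb}(X), \Omega^1) \hookrightarrow H^0(X, \Omega^1)$ of translation-invariant $1$-forms; since the source has dimension $\dim \mathrm{Alb}(X)$, this yields $\dim H^0(X, \Omega^1) \geq \dim \mathrm{Alb}(X) \geq g$, as desired. The idea behind injectivity is that if a translation-invariant $\omega$ satisfies $a_X^* \omega = 0$, then the linear functional $\omega_0 \in T_0^* \mathrm{Alb}(X)$ annihilates the subspace $V \subset T_0 \mathrm{Alb}(X)$ spanned by all the translated differentials $(d a_X)_x$; if $V$ were a proper subspace it would integrate to a proper sub-abelian variety containing $a_X(X)$, contradicting the universal property of the Albanese. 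The main obstacle is making this integration step rigorous in positive characteristic, where one needs the $p$-Lie-algebra closure of $V$ in order to detect a genuine sub-abelian subvariety; this issue is resolved in the standard construction of the Albanese in arbitrary characteristic (cf.\ Serre, Chevalley), and is really the only nontrivial technical input.
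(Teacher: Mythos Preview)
Your proof is correct but takes a genuinely different route from the paper's. You pass through the Albanese variety and reduce everything to the inequality $\dim\mathrm{Alb}(X)\leq h^0(X,\Omega^1)$, i.e., to the injectivity of $a_X^*$ on global $1$-forms. This is true, but in positive characteristic it is a real theorem (in the circle of ideas around Igusa's work on Picard and Albanese varieties) rather than a byproduct of the mere construction of the Albanese: the ``integration'' step you flag is exactly where the work lies, and one has to combine the universal property with a Frobenius-factorisation argument to exclude the possibility that all tangent images of $a_X$ lie in a proper subspace of $T_0\mathrm{Alb}(X)$. The paper's proof (suggested by Voloch) bypasses the Albanese entirely. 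It factors $f$ as $X\to Z\to Y$ with $X\to Z$ separable and $Z\to Y$ a purely inseparable map of curves; such a $Z$ exists because inseparability of $f$ forces $k(Y)^{1/p}\subset k(X)$, and this can be iterated only finitely many times inside the finitely generated extension $k(X)/\Fqbar$. A purely inseparable map of curves over a perfect field is an iterated Frobenius and preserves the genus, so one is reduced to the separable case, where $f^*:H^0(Y,\Omega^1)\to H^0(X,\Omega^1)$ is injective for the elementary reason that $f$ is generically smooth. Thus the paper's argument is short and fully self-contained, while yours is more structural but outsources the characteristic-$p$ difficulty to a classical inequality that deserves a sharper citation than ``the standard construction of the Albanese.''
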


The following proof was suggested by J.F.~Voloch.

\begin{proof}
  If $f:X\to Y$ is inseparable, then there is a purely inseparable map
  of curves $Z\to Y$ through which $f$ factors and such that $X\to Z$
  is separable.  Moreover, the genus of $Y$ is at most the genus of
  $Z$, so up to replacing $Y$ with $Z$ we may assume $f$ is separable.
  Then the pullback map of differentials 
$$
f^*:H^0(Y,\Omega^1)\to   H^0(X,\Omega^1)
$$
is an embedding (cf.~\cite[Theorem 1 in III.6.2]{shafarevich}), and since
$\dim(H^0(Y,\Omega^1))=g$, the conclusion follows.
\end{proof}

\par
Let now $C/k$ be a smooth curve, and let $A/k(C)$ be a principally-polarized
abelian variety of dimension $g$ over the function field of
$C$.  Let $\ell$ be a prime which is invertible in $k$ and let
$A[\ell]$ be the $\ell$-torsion of $A$. 
%We assume throughout the paper
%that $\ell\geq 3$.
% For ease of notation we
%% assume $k$ contains an $\ell$'th root of unity.
 
% If $k$ contains a primitive $\ell$-th root of unity, then 
There is an embedding of the group $G=\Gal(k(C)(A[\ell])/k(C))$ into
$\Gamma=\Aut(A[\ell])$, where $\Aut$ is understood to refer to the
group of linear automorphisms preserving the symplectic Weil pairing,
up to a scalar.  The subgroup of symplectic automorphisms of $A[\ell]$
is denoted $\Gamma_0$. We therefore have isomorphisms
$$
\Gamma\simeq GSp(2g,\F_{\ell}),\quad\quad
\Gamma_0\simeq Sp(2g,\F_{\ell})
$$
(where $GSp(2g)$ is the group of symplectic similitudes, also
sometimes written $CSp(2g)$ or even $SSp(2g)$.)

By the {\em geometric monodromy} of $A$ {\em modulo $\ell$}, we mean
the image of the absolute Galois group of $k^s(C)$ in $\Gamma_0$.  We
say $A$ has {\em big monodromy mod $\ell$} if the geometric monodromy
of $A$ is the whole symplectic group $\Gamma_0$, so that $\Gamma_0
\leq G$.  If $v$ is a place of $k(C)$, we write $A_v$ for the fiber
over $v$ of the Neron model of $A$ over $C$ and $G_v\leq G$ for the
decomposition group.  We say $A_v$ has {\em big monodromy modulo
  $\ell$} if $A_v$ is an abelian $g$-fold and if $\Gamma_0 \leq
G_v\leq G$. In all this, if $\ell$ is clear from the context, we may
simply speak of {\em geometric monodromy}, or say that $A$ or $A_v$
 \emph{has big monodromy}, without specifying $\ell$.

 These notions are relevant for our basic problem because of the
 following sufficient criterion for geometric simplicity, which will
 be our main tool in this and the next section. This makes precise the
 fairly intuitive fact that a factorization of an abelian variety
 forces the monodromy group to preserve the factors, and hence is
 incompatible with having big monodromy; but because the factorization
 may exist only over an extension of $k$, and is valid only up to
 isogeny, this requires some care.

\begin{prop}\label{le:gs}
  For any $g\geq 1$, there is a constant $\ell_1(g)\geq 1$ satisfying
  the following: if $\ell>\ell_1(g)$ and $A/k$ is an abelian variety
  of dimension $g$ over a field $k$ such that $A$ has big monodromy
  modulo $\ell$, then $A$ satisfies $\End_{\bar{k}}(A)=\Z$ and in
  particular is geometrically simple.
\end{prop}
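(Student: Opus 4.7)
Set $R := \End_{\bar{k}}(A)$, a torsion-free ring that is finitely generated as a $\Z$-module of rank at most $(2g)^2$. The $\ell$-adic Tate module functor furnishes an injection
\[
R \otimes_{\Z} \Z_\ell \;\hookrightarrow\; \End_{\Z_\ell}(T_\ell A) \simeq M_{2g}(\Z_\ell),
\]
and every endomorphism in $R$ is defined over some finite Galois extension $k'/k$. The crucial uniform ingredient is a theorem of Silverberg on fields of definition of endomorphisms of abelian varieties, which bounds $[k':k]$ by a constant $N(g)$ depending only on $g$. My plan is to use big monodromy to pin down the $\ell$-adic image of $\Gal(\bar{k}/k')$ in $\Aut(T_\ell A)$, compute its centralizer, and read off $R$.

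First I would choose $\ell_1(g)$ so that for every $\ell > \ell_1(g)$ the following two things hold: (a) $\Sp(2g, \F_\ell)$ has no proper subgroup of index at most $N(g)$, which is a classical fact, since the minimal permutation degree of $\Sp(2g, \F_\ell)$ grows polynomially in $\ell$; and (b) any closed subgroup of $\Sp(2g, \Z_\ell)$ whose reduction modulo $\ell$ equals $\Sp(2g, \F_\ell)$ is all of $\Sp(2g, \Z_\ell)$, the standard pro-$\ell$ lifting lemma, which uses that $\Sp(2g, \F_\ell)$ is perfect for $\ell$ large. Under the big monodromy hypothesis, the image of $\Gal(\bar{k}/k)$ in $\Aut(A[\ell])$ contains $\Sp(2g, \F_\ell)$; combining this with (a) applied to the index-$\leq N(g)$ subgroup $\Gal(\bar{k}/k')$ and then with (b) applied to the intersection of its $\ell$-adic image with $\Sp(2g, \Z_\ell)$, I conclude that the closure of the image of $\Gal(\bar{k}/k')$ in $\Aut(T_\ell A)$ contains $\Sp(2g, \Z_\ell)$.

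Since $R$ is defined over $k'$, its image in $M_{2g}(\Z_\ell)$ lies in the centralizer of $\Sp(2g, \Z_\ell)$. The standard $2g$-dimensional representation of $\Sp(2g)$ is absolutely irreducible, so Schur's lemma identifies that centralizer, inside $M_{2g}(\Q_\ell)$, with the scalars $\Q_\ell \cdot I$. Consequently $R \otimes \Q_\ell \subseteq \Q_\ell \cdot I$, forcing $\rank_\Z R \leq 1$; combined with $\Z \subseteq R$ and the torsion-freeness of $R$, this yields $R = \Z$. To conclude geometric simplicity, note that if $A$ were $\bar{k}$-isogenous to a nontrivial product $B_1 \times B_2$, then $\End_{\bar{k}}(A)$ would contain $\End_{\bar{k}}(B_1) \times \End_{\bar{k}}(B_2) \supseteq \Z \times \Z$, of rank at least $2$, contradicting $R = \Z$.

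The main obstacle is ensuring that $\ell_1(g)$ depends only on $g$ and not on $A$: without a uniform bound on $[k':k]$, the choice of $\ell_1$ would depend on $A$ itself, which is too weak for the applications in subsequent sections where $\ell$ must be chosen before the fibers $A_t$ of a family are fixed. Silverberg's theorem is precisely the tool that resolves this difficulty; the remaining ingredients, namely Schur's lemma and the pro-$\ell$ lifting lemma, are standard.
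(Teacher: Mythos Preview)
Your argument is correct and reaches the same conclusion, but via somewhat different technical choices than the paper's proof. The paper stays entirely at the mod-$\ell$ level: it picks an endomorphism $\psi$ with $\End_{k^s}(A)/\Z[\psi]$ free of $\ell$-torsion, so that $\psi$ remains non-scalar in $\End(A[\ell])$, and then applies Schur's lemma over $\F_\ell$ to conclude that the image $H$ of $\Gal(\bar{k}/K)$ (with $K$ a field of definition for $\psi$) fails to act absolutely irreducibly, hence $H\cap\Gamma_0\lneq\Gamma_0$. You instead lift to the $\ell$-adic Tate module via the pro-$\ell$ lifting lemma and apply Schur over $\Q_\ell$; this sidesteps the delicate choice of $\psi$ at the cost of one extra ingredient. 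For the bound on the field of definition of endomorphisms, the paper argues directly that $\Gal(K/k)$ acts faithfully on the free $\Z$-module $\End_K(A)$, hence embeds in $\GL(n,\Z)$ for some $n\leq 2g$, and then invokes Minkowski; your appeal to Silverberg's theorem is essentially a packaged form of the same idea. Finally, where you use growth of the minimal permutation degree of $\Sp(2g,\F_\ell)$ to exclude proper subgroups of index $\leq N(g)$, the paper uses the more elementary fact that for $\ell>3$ the group $\Sp(2g,\F_\ell)$ is generated by its elements of order $\ell$, so any proper subgroup has index at least $\ell$.

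One step in your sketch wants an extra line of justification: to apply your lifting lemma (b), you need the intersection of the $\ell$-adic image of $\Gal(\bar{k}/k')$ with $\Sp(2g,\Z_\ell)$ to surject onto $\Sp(2g,\F_\ell)$. Knowing only that the mod-$\ell$ image inside $GSp(2g,\F_\ell)$ contains $\Sp(2g,\F_\ell)$ does not immediately give this, since a lift of an element with trivial multiplier mod $\ell$ need not have multiplier $1$ in $\Z_\ell^\times$. The quickest fix is to note that $GSp/\Sp$ is abelian, so the closed commutator subgroup of the $\ell$-adic image already lies in $\Sp(2g,\Z_\ell)$, and its reduction mod $\ell$ contains $[\Sp(2g,\F_\ell),\Sp(2g,\F_\ell)]=\Sp(2g,\F_\ell)$ by perfectness.
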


\begin{proof}
  By a theorem of Chow, we have $\End_{\bar{k}}(A)=\End_{k^s}(A)$ for
  any abelian variety $A/k$ (see~\cite[Th 3.19]{conr:langneron}), so
  it suffices to prove the corresponding statement with the
  endomorphism ring over $k^s$ instead of over $\bar{k}$.
  
  Next, for any $A/k$, note that the rank of the endomorphism ring
  $\End_{k^s}(A')$, as a $\Z$-module, is constant as $A'$ runs over
  the isogeny class of $A$. If $A$ is not geometrically simple, there
  is an abelian variety $A'$ in this isogeny class which splits over
  $\bar{k}$ as $A_1\times A_2$, with $A_1$, $A_2$ of dimension $\geq
  1$. By the previous paragraph, this means in particular that
  $\End_{k^s}(A')$ contains a non-trivial endomorphism $\pi$
  satisfying $\pi^2=\pi$ (e.g., the projection onto the non-trivial
  factor $A_1$), and then $\Z[\pi]$ is a rank-two $\Z$-submodule of
  $\End_{k^s}(A')$ and thus $\End_{k^s}(A)\neq\Z$ (since it has rank
  $\geq 2$).  In particular, by contraposition, $A$ is geometrically
  simple if $\End_{k^s}(A)=\Z$.
  
  Now, let $\ell$ be a prime number such that some abelian variety
  $A/k$ has big monodromy modulo $\ell$ and satisfies
  $\End_{k^s}(A)\neq\Z$.  Then, by the theory of abelian groups, there
  is an endomorphism $\psi$ in $\End_{k^s}(A)$ such that $\Z[\psi]$ is
  a rank-two $\Z$-submodule of $\End_{k^s}(A)$ and moreover
  $\End_{k^s}(A)/\Z[\psi]$ has no $\ell$-torsion.  The latter
  assumption implies that the image of $\Z[\psi]$ in
  $\End(A[\ell])\simeq M_{2g}(\F_\ell)$ is a rank-two
  $\F_\ell$-submodule, because otherwise $\psi-m$ would be divisible
  by $\ell$ for some $m\in\Z$.  More precisely, we may find $\psi$
  such that the image of $\psi$ in $\End(A[\ell])$ does not lie in the
  scalar subgroup $\F_\ell^\times$.

  Let $K$ be the Galois closure of the splitting field of $\psi$
  (i.e., $K$ is the fixed field of the subgroup of $\Gal(\bar{k}/k)$
  fixing $\psi$) and let $H$ be its Galois group of $K(A[\ell])/K$.
  There is a natural inclusion $H\to G$, where $G$ is the monodromy
  group of $A$ modulo $\ell$.
  % It can be seen naturally as a subgroup of $k(A_v[\ell])/k$, hence
  % as a subgroup of $\Gamma\simeq GSp(2g,\F_{\ell})$.

  Since the action of $\psi$ on $A[\ell]$ commutes with $H$ and $\psi$
  does not lie in the scalar subgroup
  $\F_\ell^\times\leq\End(A[\ell])$, Schur's Lemma implies that the
  subgroup $H\leq M_{2g}(\F_\ell)$ does not act absolutely irreducibly
  on $A[\ell]$. Since $G\cap\Gamma_0=\Gamma_0$ does have this property
  (because of the big monodromy assumption), $H\cap\Gamma_0$ is a proper
  subgroup of $\Gamma_0$.  Now, if
  $\ell>3$, we know that $\Gamma_0$ is generated by its elements of order
  $\ell$, because they generate a normal subgroup and $Z(\Gamma_0)=\{\pm 1\}$
  is the only proper normal subgroup (see \cite[Theorem 5]{stei}).
  Thus, there exists at least one element $\sigma$ of order $\ell$ in
  the complement $G-H$.  In particular, the $\sigma$-orbit of $H$ in
  the permutation representation on $G/H$ has $\ell$ elements, hence
  we find that $[G:H]\geq\ell$.

  On the other hand, the Galois group $\Gal(K/k)$ acts faithfully on
  the free $\Z$-module $\End_K(A)$, so that it is isomorphic to a
  finite subgroup $F$ of $\GL(n,\Z)$ for some $n\leq 2g$.  By a
  theorem of Minkowski, $F$ injects into $\GL(n,\Z/3\Z)$ (see for
  instance \cite{silv:sz}) and thus its order is bounded by a constant
  depending only on $g$. Let $\ell_1(g)$ be this constant. Since
  Galois theory gives
$$
[G:H]\leq |\Gal(K/k)|,
$$
it follows from this and the previous paragraph that
$$
\ell\leq [G:H]\leq |F|\leq \ell_1(g),
$$
as desired.
\end{proof}

Our first (and most general) approach to the problem mentioned in the
introduction uses some deep group-theoretic results of Liebeck--Saxl
\cite{ls} and Guralnick \cite{gura:monodromycharp}, in order to apply
Proposition~\ref{le:gs}. This is contained in the following result:

\begin{prop}\label{le:lsg}
  If $g_1\geq 0$ is a constant, then there is a constant $\ell_2(g_1)$
  satisfying the following.  If $\ell>\ell_2(g_1)$ and $X\to C$ is a
  geometric Galois cover with group $G=\Sp(2g,\F_{\ell})$, then for any
  proper subgroup $H<G$, the genus of $X/H$ is at least $g_1$.
\end{prop}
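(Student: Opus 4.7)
The plan is to apply the Riemann--Hurwitz formula to the (in general non-Galois) cover $\pi : Y := X/H \to C$ of degree $n := [G : H]$, obtaining
\[
2g_Y - 2 = n(2g_C - 2) + \deg R_\pi,
\]
where $R_\pi \geq 0$ is the ramification divisor; the task is to bound the right-hand side from below.

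First I would bound $n$. By the Liebeck--Saxl theorem \cite{ls} on the subgroup structure of classical groups, the minimal index of a proper subgroup of $\Sp(2g,\F_\ell)$ is of order $\ell^{2g-1}$, realized for instance by the stabilizer of a point in $\P^{2g-1}$. In particular $n \geq \ell$ for every proper $H$ and every $g \geq 1$, so as soon as $\ell > g_1$ we have $n > g_1$. If moreover $g_C \geq 2$, Riemann--Hurwitz immediately gives $g_Y \geq n+1 > g_1$, handling that case.

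The substantial case is $g_C \in \{0,1\}$, in which $n(2g_C-2) \leq 0$ and one must extract the bound from $\deg R_\pi$. Here I would invoke Guralnick's theorem \cite{gura:monodromycharp} on monodromy groups of covers of curves. The monodromy group $M$ of $\pi$ is the image of $G=\Sp(2g,\F_\ell)$ acting on the cosets $G/H$, equivalently the Galois group of the Galois closure of $\pi$. Since the only proper nontrivial normal subgroups of $G$ are central (contained in $\{\pm 1\}$, for $\ell$ odd), the group $M$ has $\Sp(2g,\F_\ell)/\{\pm 1\}$ as a simple composition factor. Guralnick's theorem restricts the nonabelian, non-alternating simple composition factors of the monodromy group of any cover of genus at most $g_1$ to groups of Lie type in the characteristic of $k$ together with a finite exceptional list depending on $g_1$. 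For $\ell$ large enough in terms of $g_1$, the groups $\Sp(2g,\F_\ell)/\{\pm 1\}$, for all $g\geq 1$, lie outside this list (they are cross-characteristic, non-alternating, and of unbounded order), forcing $g_Y > g_1$. This defines $\ell_2(g_1)$.

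The main obstacle is applying Guralnick's theorem uniformly in $g$: the exceptional list in the statement of the theorem must be bounded in terms of $g_1$ alone, and one must verify that no $\Sp(2g,\F_\ell)/\{\pm 1\}$ for any $g\geq 1$ falls into it for $\ell$ beyond a threshold depending only on $g_1$. A secondary point is the case $g_C=1$ with $\pi$ étale in characteristic $0$, which is excluded since étale covers of elliptic curves have abelian monodromy while $\Sp(2g,\F_\ell)/\{\pm 1\}$ is nonabelian simple; the analogous situation in positive characteristic is controlled similarly using the structure of the tame fundamental group.
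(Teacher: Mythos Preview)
Your proposal is correct and takes essentially the same approach as the paper: both rest on the Liebeck--Saxl and Guralnick results bounding the genus of covers whose monodromy has a large cross-characteristic classical composition factor. The paper's proof is simply a two-line citation of \cite[Cor.~2 to Thm.~1]{ls} (tame case) and \cite[Thm.~1.5]{gura:monodromycharp} (general case); you unpack more of the mechanism---separating off $g_C\geq 2$ by a direct Riemann--Hurwitz plus minimal-index argument, and spelling out why $\PSp(2g,\F_\ell)$ escapes Guralnick's permitted list uniformly in $g$---but the substance is the same black box.
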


\begin{proof}
  In the case where $f$ is tamely ramified (for instance in
  characteristic zero), this follows from \cite[Corollary 2 to Theorem
  1]{ls}, and in the general case, this follows from \cite[Theorem
  1.5]{gura:monodromycharp}.
\end{proof}

\begin{rem}
  The constant $\ell_2(g_1)$ is
  conjectured to be independent of $g_1$ (\cite[Conjecture
  1.6]{gura:monodromycharp}), and in the tame case this follows from
  \cite[Theorem A]{froh:frohmagannals}.
\end{rem}

What is required for Proposition~\ref{le:lsg} is a very thorough
understanding of the maximal proper subgroups of $\Sp(2g,\F_{\ell})$.
As written, the results in \cite{ls} and \cite{gura:monodromycharp}
both use the classification of finite simple groups.  More precisely,
the proof of Corollary 9.5 in \cite{gura:monodromycharp} uses Theorem
1 of \cite{ls} which in turn rests on the classification-dependent
Theorem 4.1 of \cite{lie}.  However, we learned from Guralnick
\cite{gura:personal} that Magaard has an unpublished proof of Theorem
1 of \cite{ls} which does not use the classification.

Proposition~\ref{le:lsg} forms the main content of the following
proposition.

\begin{prop}
If $\ell>\ell_2(g_1(k))$ and $A$ has big monodromy mod $\ell$, then $A_v$
has big monodromy mod $\ell$ for all but finitely many $v\in C(k)$.
\label{pr:general}
\end{prop}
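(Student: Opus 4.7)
The plan is to identify each $v\in C(k)$ for which $A_v$ fails to have big monodromy with a $k$-rational point on some quotient of the $\ell$-torsion cover, then invoke Proposition~\ref{le:lsg} to bound the genus of that quotient from below and Theorem~\ref{thm::mordell} to conclude. Let $\pi\colon X\to C$ denote the Galois cover of $C$ whose function field extension is $k(C)(A[\ell])/k(C)$, with arithmetic Galois group $G\leq GSp(2g,\F_\ell)$ and geometric Galois group $\bar G$; by hypothesis $\bar G=\Gamma_0=\Sp(2g,\F_\ell)$. After excluding the finite set of $v\in C(k)$ at which $A$ has bad reduction or $\pi$ ramifies, each remaining $v$ yields a specialization homomorphism $\rho_v\colon\Gal(\bar k/k)\to G$ (well defined up to $G$-conjugacy) whose image $I$ is a conjugate of the decomposition group $G_v$; in these terms $A_v$ has big monodromy modulo $\ell$ if and only if $I\supseteq\bar G$.

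I would next translate the failure of big monodromy into the existence of a rational lift on a quotient cover. If $I\not\supseteq\bar G$, then necessarily $I\subsetneq G$, and we may choose a maximal proper subgroup $\tilde M<G$ containing $I$. The standard $G$-torsor interpretation of the fiber $X_v$ shows that $(X/\tilde M)(k)$ contains a point over $v$ precisely when a $G$-conjugate of $I$ lies in $\tilde M$, so $v$ does lift to a $k$-rational point of the $k$-scheme $X/\tilde M$.

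The crucial step is to show that $H:=\tilde M\cap\bar G$ is a \emph{proper} subgroup of $\bar G$, so that Proposition~\ref{le:lsg} applies nontrivially. Let $k'$ be the algebraic closure of $k$ inside $k(X)$, a finite Galois extension of $k$ with $\Gal(k'/k)=G/\bar G$, and let $\pi_0\colon G\to G/\bar G$ be the natural projection. Because $v\in C(k)$, the composition $\pi_0\circ\rho_v$ agrees with the restriction map $\Gal(\bar k/k)\to\Gal(k'/k)$, which is surjective, so $\pi_0(I)=G/\bar G$. If $\tilde M$ contained $\bar G$, then $\tilde M=\pi_0^{-1}(\pi_0(\tilde M))\supseteq\pi_0^{-1}(\pi_0(I))=G$, contradicting properness. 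Hence $H<\bar G=\Sp(2g,\F_\ell)$ is proper.

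Finally, I would apply Proposition~\ref{le:lsg} to the geometric Galois cover $\bar X_0\to\bar C$ (with $\bar X_0$ a connected component of $X\times_k\bar k$) and the proper subgroup $H$: since $\ell>\ell_2(g_1(k))$, the quotient $\bar X_0/H$, which represents an arbitrary geometric component of $X/\tilde M$, has genus at least $g_1(k)$. Theorem~\ref{thm::mordell} then gives finiteness of $k$-rational points on each geometric component of $X/\tilde M$, and summing over the finitely many conjugacy classes of maximal proper subgroups $\tilde M\leq G$ yields finiteness of the set of exceptional $v\in C(k)$. The main obstacle is the subgroup-theoretic assertion that $\tilde M\cap\bar G$ is proper in $\bar G$: without ruling out the case $\tilde M\supseteq\bar G$, the corresponding quotient would reduce to a constant field extension of $C$ carrying no genus information at all. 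The rest of the argument is a routine combination of the standard $G$-torsor dictionary with Proposition~\ref{le:lsg} and the Mordell-type finiteness of Theorem~\ref{thm::mordell}.
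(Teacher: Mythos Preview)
Your proposal is correct and follows essentially the same route as the paper: lift each bad $v$ to a $k$-rational point on a quotient of the $\ell$-torsion cover, use Proposition~\ref{le:lsg} to bound the genus of that quotient from below, and finish with Theorem~\ref{thm::mordell}.

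The one noteworthy difference is organizational. You pass to a maximal proper subgroup $\tilde M<G$ containing $I=G_v$ and then work to show that $\tilde M\cap\bar G$ is proper in $\bar G$, calling this the ``main obstacle.'' The paper instead takes $H=G_v$ itself (or any $H\leq G$ not containing $\Gamma_0$): since $G_v\not\supseteq\Gamma_0$ is exactly the hypothesis that $A_v$ lacks big monodromy, the properness of $H\cap\Gamma_0$ in $\Gamma_0$ is immediate, and no surjectivity argument onto $G/\bar G$ is needed. Your obstacle is thus an artifact of the detour through maximal subgroups of $G$; it is handled correctly, but it can be avoided entirely. The trade-off is that your version ranges over finitely many conjugacy classes of maximal subgroups rather than over all subgroups of the finite group $G$, which is cosmetically tidier but makes no difference to the finiteness conclusion.
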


\begin{proof}
Let $X/k$ be the smooth curve with function field $k(C)(A[\ell])$.  The map
of curves $X\to C$ is generically Galois with group $G$ containing
$\Gamma_0$.  Let $v$ be a point in $C(k)$ and let $w$ be a point in $X$
lying over $v$ with decomposition group $G_v\leq \Gamma$.  If $H\leq G$ is
a subgroup not containing $\Gamma_0$,  and $G_v\leq H$, then the image of
$w$ in the quotient curve $X/H$ has degree $[G_v:G_v\cap H]=1$ over $v$,
hence is a $k$-rational point of $X/H$.  In particular, to prove the
theorem it suffices to show that $X/H$ has genus greater than $g_1(k)$ for any
proper subgroup $H<G$ because then Theorem~\ref{thm::mordell} implies
that
$$
\bigcup_{H<G}\,(X/H)(k)
$$ 
is finite.  But this is exactly Proposition~\ref{le:lsg} applied to
the proper subgroup $H \cap \Gamma_0$ of $\Gamma_0$.
\end{proof}

We can now deduce the following concrete application:

\begin{thm}\label{th:main}
  Let $k$ be an infinite field of finite type over the prime field,
  for instance a number field. Let $g\geq 1$ be an integer, and let
  $f\in k[X]$ be a squarefree polynomial of degree $2g$.
\par
Let $A$ be the Jacobian of the hyperelliptic curve of genus $g$ over
$k(t)$ with affine model
$$
y^2 = f(x)(x-t).
$$
Then there are only finitely many $t\in k$ such that $A_t$ is not
geometrically simple.
\end{thm}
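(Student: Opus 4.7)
The plan is to combine the three propositions above with a geometric input about the family. Let $C = \P^1_k$ parametrize $t$ and let $A/k(t)$ denote the generic Jacobian of $y^2 = f(x)(x-t)$. Once we know that $A$ has big monodromy modulo some prime $\ell > \max(\ell_1(g),\ell_2(g_1(k)))$, Proposition~\ref{pr:general} produces a finite set $E \subset \P^1(k)$ outside of which every fiber $A_v$ has big monodromy modulo $\ell$, and Proposition~\ref{le:gs} then forces $\End_{\bar k}(A_v) = \Z$ for each such $v$, so in particular $A_v$ is geometrically simple. Adjoining to $E$ the (finitely many) roots of $f$, where the hyperelliptic curve already degenerates, yields a finite set containing every $t \in k$ with $A_t$ non-geometrically-simple.

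The substantive work is therefore to verify the big-monodromy hypothesis: for all but finitely many primes $\ell$, the image of the geometric fundamental group of $\P^1_{\bar k}$ minus the bad locus, acting on $A[\ell]$, is the full symplectic group $\Sp(2g,\F_\ell)$. The crucial geometric feature of this particular family is that as $t$ moves, exactly one branch point of the hyperelliptic double cover (namely $t$ itself) varies, while the $2g$ roots of $f$ remain fixed. At each specialization $t \to \alpha$ with $f(\alpha) = 0$, two branch points collide and the local geometric monodromy acts on $H_1$ of a nearby smooth fiber as a single Picard--Lefschetz transvection along a vanishing cycle. This produces $2g$ transvections in the geometric monodromy, one for each root of $f$.

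A standard argument, going back to A'Campo and made quantitative for hyperelliptic families by J.-K.~Yu and others, shows that the intersection pattern of the corresponding vanishing cycles forces the group generated by these transvections to be all of $\Sp(2g,\F_\ell)$, with at most finitely many exceptional characteristics $\ell$. Granted this input, the theorem follows formally: one picks $\ell$ simultaneously larger than $\ell_1(g)$ and $\ell_2(g_1(k))$ and outside the exceptional set, and concludes by Propositions~\ref{pr:general} and~\ref{le:gs} as indicated.

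The main obstacle is precisely this big-monodromy step. The Picard--Lefschetz identification of each local monodromy as a transvection is routine, but the passage from ``monodromy contains $2g$ specific transvections'' to ``monodromy equals the full symplectic group mod $\ell$'' is a genuine group-theoretic statement, and the reduction modulo $\ell$ must be carried out carefully enough to exclude only finitely many primes, independently of the field $k$ and of the coefficients of $f$.
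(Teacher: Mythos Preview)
Your proposal is correct and follows essentially the same route as the paper: cite the big-monodromy result of J.-K.~Yu (reproved in~\cite{hall:monodromy}) for this hyperelliptic family, choose $\ell > \max(\ell_1(g),\ell_2(g_1(k)))$, and then apply Proposition~\ref{pr:general} followed by Proposition~\ref{le:gs}. The only slight discrepancy is that the cited result actually gives big monodromy for \emph{every} $\ell\geq 3$, not merely all but finitely many, so your discussion of an ``exceptional set'' of primes is unnecessary---one simply picks any $\ell$ exceeding the two constants.
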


\begin{proof}
  By a result of J-K. Yu and the third author~\cite{hall:monodromy},
  $A$ has big monodromy modulo $\ell$ for any $\ell\geq 3$. Choosing
  $\ell > \max(2,\ell_1(g),\ell_2(g_1(k)))$ yields the desired result by combining
  Proposition~\ref{le:gs} and Proposition~\ref{pr:general}.
\end{proof}

In the theorems above we have used the fact that $A$ has big monodromy
modulo some prime $\ell$ in order to show that almost all the fibers
$A_v$ have big monodromy modulo the same $\ell$.  It is worth pointing
out that the hypothesis that $A_v$ has big monodromy modulo a
sufficiently large fixed $\ell_0$ actually implies that it has big
monodromy modulo almost all $\ell$, although we will only prove it for
global fields.

\begin{prop}
  Suppose $k$ is a global field, i.e.~a number field or a function
  field of a curve over a finite field.  If $A_v$ has big monodromy
  modulo $\ell_0$, for some $\ell_0\geq 5$, then there is a constant
  $\ell_3(A_v)$ so $A_v$ has big monodromy modulo $\ell$ for every prime
  $\ell>\ell_3(A_v)$.
\end{prop}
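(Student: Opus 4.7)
The plan uses three tools in sequence: Faltings' isogeny theorem to translate mod-$\ell_0$ representation-theoretic information into an endomorphism statement, a standard $\ell$-adic lifting lemma to upgrade mod-$\ell_0$ big image to an $\ell_0$-adic identification of the algebraic monodromy, and unconditional $\ell$-independence results for the compatible system of $\ell$-adic Galois representations attached to $A_v$ to transfer the algebraic-monodromy statement to every prime $\ell$.

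For each prime $\ell$ I write $\rho_\ell\colon\Gal(\bar k/k)\to GSp(T_\ell A_v)$ for the representation on the Tate module, $G_\ell$ for its image, and $\mathcal{G}_\ell\subseteq GSp_{2g,\Q_\ell}$ for its Zariski closure. First, since $\Sp(2g,\F_{\ell_0})\subseteq G_{\ell_0}$ acts absolutely irreducibly on $A_v[\ell_0]$ for $\ell_0\geq 3$, the commutant of $G_{\ell_0}$ in $\End_{\Q_{\ell_0}}(V_{\ell_0}A_v)$ is $\Q_{\ell_0}$; Faltings' isogeny theorem (or Zarhin's analogue in positive characteristic) then gives $\End_{\bar k}(A_v)\otimes\Q_{\ell_0}=\Q_{\ell_0}$, hence $\End_{\bar k}(A_v)=\Z$. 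Second, the standard lifting lemma (valid for $\ell_0\geq 5$, provable via the fact that the congruence kernel of $\Sp(2g,\Z_{\ell_0})\to\Sp(2g,\F_{\ell_0})$ is a pro-$\ell_0$ Frattini subgroup) gives $G_{\ell_0}\supseteq\Sp(2g,\Z_{\ell_0})$. The similitude character of $G_{\ell_0}$ is the $\ell_0$-adic cyclotomic character, whose image has Zariski closure $\Gm$, so $\mathcal{G}_{\ell_0}=GSp_{2g,\Q_{\ell_0}}$.

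The crux is then to propagate this to every prime. Invoking the $\ell$-independence results for the compatible system attached to $A_v$ (Serre; Larsen--Pink), the identity component $\mathcal{G}_\ell^\circ$ has $\ell$-independent rank and $\ell$-independent formal character on $V_\ell A_v$. Combined with the absolute irreducibility of $V_\ell A_v$ for every $\ell$ (via Faltings and $\End_{\bar k}(A_v)=\Z$) and the presence of the cyclotomic similitude character, these invariants rule out any possibility but $\mathcal{G}_\ell=GSp_{2g,\Q_\ell}$ for every $\ell$. Finally, Nori's theorem on mod-$\ell$ reductions of compact $\ell$-adic subgroups (or, equivalently, the classification of maximal subgroups of $\Sp(2g,\F_\ell)$) produces a constant $\ell_3(A_v)$ such that for $\ell>\ell_3(A_v)$, the mod-$\ell$ reduction of $G_\ell$ contains $\Sp(2g,\F_\ell)$, as required. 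The principal obstacle is the $\ell$-independence input: full $\ell$-independence of $\mathcal{G}_\ell$ amounts to the Mumford--Tate conjecture, open in general, but only the weaker unconditional $\ell$-independence of rank and formal character is needed here, which keeps the plan unconditional.
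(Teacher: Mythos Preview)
Your approach is correct and shares the same overall architecture as the paper's proof: lift big monodromy from $\F_{\ell_0}$ to $\Z_{\ell_0}$, then propagate to all primes. The paper is terser at both stages, simply citing Serre: Lemme~1 of \cite{serre:vigneras} for the lifting step (your ``standard lifting lemma''), and then \cite[2.2.7]{serre:resume} and \cite[Th\'eor\`eme~3]{serre:vigneras} (or \cite[8.2]{serre:vigneras} in the function-field case) as a black box to conclude directly that the $\ell$-adic image contains $\Sp(2g,\Z_\ell)$ for all large $\ell$. You instead unpack this second step: Faltings/Zarhin to obtain $\End_{\bar k}(A_v)=\Z$, $\ell$-independence of rank and formal character to pin down the algebraic envelope $\mathcal{G}_\ell=GSp_{2g}$ for every $\ell$, and Nori's theorem to descend to $\F_\ell$. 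This is a legitimate alternative, essentially a Larsen--Pink-flavoured variant of Serre's Frobenius-tori argument that the paper is citing; what it buys you is that the machinery is made visible rather than hidden in a reference. Two small points to tighten: first, Faltings gives $\End_k(A_v)\otimes\Q_{\ell_0}$ equal to the commutant, so passing to $\End_{\bar k}$ requires a word about why every finite-index subgroup of $G_{\ell_0}$ still acts absolutely irreducibly (immediate once you have $\Sp(2g,\Z_{\ell_0})\subseteq G_{\ell_0}$); second, your claim that ``rank, formal character, irreducibility, and surjective similitude force $\mathcal{G}_\ell=GSp_{2g}$'' is true but deserves a line of justification---for instance, that the only closed root subsystem of $C_g$ spanning the ambient space and giving an irreducible standard representation is $C_g$ itself.
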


\begin{proof}
  If $A_v$ has big monodromy for $\ell_0\geq 5$, then the $\ell_0$-adic
  monodromy group of $A_v$ contains $\Aut(T_{\ell_0}
  A)\simeq\Sp(2g,\Z_{\ell_0})$ (see \cite[Lemme 1]{serre:vigneras}).
  Therefore, if $k$ is a number field, then \cite[2.2.7]{serre:resume}
  and \cite[Th\'eor\`eme 3]{serre:vigneras} imply that for every
  sufficiently large $\ell$, the $\ell$-adic monodromy group of $B$
  contains $\Sp(2g,\Z_\ell)$.  If $k$ is a function field over a
  finite field, then one can apply \cite[8.2]{serre:vigneras} to
  deduce a similar statement.
%{\bf Needs some thinking:  we don't know $K$ is a number field or function field, right?  So let's check under what circumstances we actually know this.  Or just comment it out for now.
\end{proof}

It is worth noting here that this method does {\em not} allow
the bound $\ell_3(A_v)$ to be chosen independently of $A_v$.  To prove
such a uniform bound over a rational function field, for example, would require showing that the Siegel modular
varieties parametrizing abelian $g$-folds with `$H$-level structure'
contain no unexpected rational curves; this can be carried out when $g=1$,
since the Siegel modular variety is just a curve (see~\cite{cojo:cojohall})
but seems difficult in general.  A theorem of
Nadel~\cite{nadel:nonexistence} proves such a result (as a special case of
a much more general theorem) when $H$ is the trivial subgroup of
$\Sp(2g,\F_\ell)$.

\section{Methods from arithmetic geometry, II}
\label{sec:geoII}

In the special case of families of hyperelliptic curves contemplated
in the present paper, we can also obtain results using easier group
theory in place of Proposition~\ref{le:lsg}, as we now explain.  Again,
we will use Proposition~\ref{le:gs} to obtain geometric simplicity.

We continue with the notation introduced in the previous section
except that now we must work in characteristic zero, so we assume $k$
is a finitely generated over a number field. This implies that
Theorem~\ref{thm::mordell} is valid with $g_1(k)=2$.
\par
First of all, we remark that when $A$ has big monodromy modulo a
sufficiently large $\ell$ and at least three fibers where the reduction
is not potentially good, then one can
show that $A_v$ has big monodromy modulo $\ell$ via the results in
\cite{hall:maximal}, which require only Thompson's classification of
so-called quadratic pairs \cite{thom}.

By restricting $A$ further, we can make our work even simpler, while
still proving a general enough result to obtain the theorems stated in
the introduction.  For this, we say $A$ {\em degenerates simply} at
$v$ if the identity component of $A_v$ is the extension of an abelian
variety by a one-dimensional torus and if the component group of $A_v$
has order prime to $\ell$.  There are only finitely many $v$ where $A$
degenerates simply.  From the group-theoretic point of view, this
geometric condition is useful because of the following fact:

\begin{lem}\label{lm:degener}
  With notation as above, if $A$ degenerates simply at $v$, then the
  inertia group $I_v\leq G_v$ is generated by a transvection.
\end{lem}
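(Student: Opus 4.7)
The plan is to combine the theory of semistable reduction with Grothendieck's monodromy filtration. The hypothesis that $A$ degenerates simply at $v$ is precisely the statement that $A$ has semi-abelian reduction there with toric rank equal to one, and this toric rank will produce the rank-one nilpotent operator that generates the transvection. Because the reduction is semi-abelian and $\ell$ is invertible in the residue field, Grothendieck's semistable reduction theorem (SGA~$7$) implies that $I_v$ acts unipotently on $T_\ell A$ through its tame quotient. The associated monodromy filtration
\[
0\subseteq W_{-2}\subseteq W_{-1}\subseteq T_\ell A,
\]
in which $W_{-2}$ is the Tate module of the toric part of $A_v^0$, $W_{-1}/W_{-2}$ is the Tate module of the abelian part, and $T_\ell A/W_{-1}$ is its Cartier-dual twist, has $\Z_\ell$-ranks $1$, $2(g-1)$, $1$ under the toric rank-one hypothesis. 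Inertia acts trivially on each graded piece, and for a topological generator $\sigma$ of tame inertia the operator $\sigma-1$ on $T_\ell A$ factors as the rank-one projection $T_\ell A\twoheadrightarrow T_\ell A/W_{-1}$ followed by a map landing in $W_{-2}\subset T_\ell A$.

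Reducing modulo $\ell$, the image $\bar\sigma$ of $\sigma$ in $\Aut(A[\ell])$ is unipotent with $\bar\sigma-1$ of rank at most one. The main obstacle is to see that the rank is \emph{exactly} one, and this is where the component-group hypothesis enters. By Grothendieck's monodromy pairing (SGA~$7$, Expos\'e IX), the cokernel of the integral monodromy map $T_\ell A/W_{-1}\to W_{-2}$ is essentially the component group $\Phi_v$ of $A_v$. The assumption $\ell\nmid|\Phi_v|$ therefore forces this map to reduce modulo $\ell$ to a non-zero map between one-dimensional $\F_\ell$-spaces, hence an isomorphism, so $\bar\sigma-1$ has rank exactly one on $A[\ell]$.

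It remains to collect the final conclusions. Since $\ell$ is coprime to the residue characteristic, inertia acts trivially on $\mu_\ell$ and so $\bar\sigma$ preserves the Weil pairing on the nose, placing it in the symplectic group $\Gamma_0$. A unipotent symplectic element with $\bar\sigma-1$ of rank one is by definition a symplectic transvection. Finally, because the image of $I_v$ in $\Aut(A[\ell])$ factors through the tame quotient, which is procyclic, it is itself cyclic and generated by $\bar\sigma$. Thus $I_v$ acts on $A[\ell]$ as the cyclic group generated by this single transvection, which is the statement of the lemma.
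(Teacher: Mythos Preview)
Your proof is correct and follows essentially the same route as the paper's: both invoke the SGA~7 monodromy theory to see that semi-abelian reduction with toric rank one forces $I_v$ to act through a single unipotent element with $(\sigma-1)$ of rank at most one, and both use the hypothesis $\ell\nmid|\Phi_v|$ to exclude the trivial case. The paper phrases the non-triviality step as ``$A[\ell]$ does not split over the strict henselization, hence the extension ramifies'' (citing SGA~7, (11.1.3)), while you unpack this via the monodromy pairing and its cokernel; these are two faces of the same SGA~7 result.
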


\begin{proof}
  By \cite[(2.5.4) and Corollaire 3.5.2]{sga7}, $I_v$ is generated by
  a unipotent element $\tau$ satisfying $\dim((\tau-1)A[\ell])\leq 1$,
  so $\tau$ is either a transvection or is trivial.  Moreover,
  $A[\ell]$ does not split over the strict henselization of the local
  field $k(C)_v$ because the component group of $A_v$ has order prime
  to $\ell$ (cf.~\cite[(11.1.3)]{sga7}), hence $k(C)(A[\ell])$
  ramifies over $v$ and $\tau\neq 1$ is a transvection, as claimed.
\end{proof}

We will also use here the following group-theoretic lemma, the
potential significance of which is clear from the previous one.

\begin{lem}\label{le:tr}
  If $\ell\geq 3$, then a subgroup of $Sp(2g,\F_{\ell})$ which
  contains $\ell^{2g-1}$ transvections is the whole of
  $Sp(2g,\F_{\ell})$.
\end{lem}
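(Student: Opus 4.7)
The plan is to parameterize the symplectic transvections as $T_{v,\lambda}(x) = x + \lambda\langle x,v\rangle v$ for $v\in\F_\ell^{2g}\setminus\{0\}$ and $\lambda\in\F_\ell$. For fixed $v$, the map $\lambda\mapsto T_{v,\lambda}$ is an additive homomorphism into $\Sp(2g,\F_\ell)$, so $\{\lambda:T_{v,\lambda}\in H\}$ is an $\F_\ell$-subspace of $\F_\ell$ and equals either $\{0\}$ or all of $\F_\ell$; moreover $T_{v,\lambda}$ depends only on the line $\F_\ell v$ up to rescaling of $\lambda$. Call a line $L\subset\F_\ell^{2g}$ \emph{good} if every $T_{v,\lambda}$ with $v\in L$ lies in $H$, and let $V$ denote the set of good lines. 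Each good line contributes $\ell-1$ nontrivial transvections, so the hypothesis gives $|V|(\ell-1)\geq\ell^{2g-1}$; since $\ell^{2g-1}\equiv 1\pmod{\ell-1}$, this forces $|V|\geq(\ell^{2g-1}-1)/(\ell-1)+1$.

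The key combinatorial input is the claim that for every proper nonzero subspace $U\subsetneq\F_\ell^{2g}$, the number of lines in $U\cup U^\perp$ is at most $(\ell^{2g-1}-1)/(\ell-1)$. Writing $d=\dim U$ and $r=\dim(U\cap U^\perp)$, inclusion-exclusion gives the count $(\ell^d+\ell^{2g-d}-\ell^r-1)/(\ell-1)$, reducing the claim to the inequality $\ell^d+\ell^{2g-d}-\ell^r\leq\ell^{2g-1}$. I would verify this by an elementary case analysis on $d$ and $r$ (noting $\ell\geq 3$, $1\leq d\leq 2g-1$, $r$ has the same parity as $d$ since the symplectic form on $U$ has even rank, and $0\leq r\leq\min(d,2g-d)$), with equality holding only when $U$ is a line or a hyperplane. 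Combined with the lower bound on $|V|$, this shows $V\not\subseteq U\cup U^\perp$ for any such $U$.

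Now let $H_0\leq H$ be the subgroup generated by the transvections in $H$. From the explicit formula, $T_{v,\lambda}$ preserves a subspace $U$ if and only if $v\in U\cup U^\perp$; so if $H_0$ stabilized some proper nonzero $U$, every good line would lie in $U\cup U^\perp$, contradicting the previous paragraph. Hence $H_0$ acts irreducibly on $\F_\ell^{2g}$.

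To conclude I would appeal to a classical theorem (in the spirit of results of McLaughlin and Kantor on groups generated by long root elements) saying that an irreducible subgroup of $\Sp(2g,\F_\ell)$ generated by symplectic transvections is all of $\Sp(2g,\F_\ell)$ when $\ell\geq 3$ is prime; then $H\supseteq H_0=\Sp(2g,\F_\ell)$. This final step is the main obstacle: making the argument self-contained would require an iterative enlargement procedure, using that conjugation of $T_w$ by $T_v$ yields a transvection along $w+c\langle w,v\rangle v$ (so that two good lines with nonzero pairing force every line in their $2$-plane to be good), and then bootstrapping from a single non-degenerate $2$-plane to successively larger non-degenerate subspaces all of whose lines are good, until exhausting $\F_\ell^{2g}$.
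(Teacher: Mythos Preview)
Your argument is correct and close in spirit to the paper's. Both proofs rest on the same counting: the set $V$ of lines supporting transvections in $H$ has strictly more than $(\ell^{2g-1}-1)/(\ell-1)$ elements, too many to be confined by the obstruction to generating $\Sp(2g,\F_\ell)$. The paper cites the criterion of Brown--Humphries, which says that a set $S$ of transvection lines generates $\Sp(2g,\F_\ell)$ iff $S$ spans $\F_\ell^{2g}$ and the graph on $S$ (with edges joining non-orthogonal lines) is connected; failure of either condition forces $|S|\leq(\ell^{2g-1}-1)/(\ell-1)$, and the paper handles these as two separate cases. Your version unifies the two cases into the single bound on lines in $U\cup U^\perp$, then phrases the conclusion as irreducibility of $H_0$ and appeals to a McLaughlin--Kantor classification instead. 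These formulations are in fact equivalent: $H_0$ is reducible iff $V\subseteq U\cup U^\perp$ for some proper nonzero $U$, which in turn happens iff $V$ fails to span or the Brown--Humphries graph $G(V)$ is disconnected (if $G(V)=G_1\sqcup G_2$, take $U$ to be the span of the lines in $G_1$). So the bootstrapping you sketch at the end would essentially reprove the Brown--Humphries criterion; citing that result directly, as the paper does, is the shortest route and removes the need to invoke the heavier classification.
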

 
\begin{proof} This follows immediately from a theorem of Brown and
  Humphries~\cite{brow:sympgen}, which gives a criterion for a set of
  transvections to generate the symplectic group $Sp(2g,\F_{\ell})$.
  More precisely, recall that there is a natural bijection between
  cyclic groups generated by transvections and lines in
  $\F_\ell^{2g}$; namely, we take the group generated by $\tau$ to the
  $1$-dimensional space $(\tau-1)(\F_\ell^{2g})$.  Let $S \subset
  \P(\F_\ell^{2g})$ be a set of subgroups generated by transvections.
  Let $G(S)$ be the graph with set of vertices $S$ and with edges
  given by those pairs $(s_1,s_2)\in S\times S$ such that the space
  spanned by $s_1$ and $s_2$ (thought of as lines in $\F_\ell^{2g}$)
  is not isotropic.  Then~\cite{brow:sympgen} shows that (for
  $\ell\geq 3$), $S$ generates $G$ if and only if the elements of $S$
  span $\F_\ell^{2g}$, and if $G(S)$ is connected.  If the lines in
  $S$ fail to span all of $\F_\ell^{2g}$, then obviously
$$
|S| \leq \frac{\ell^{2g-1}-1}{\ell-1}.
$$
\par
On the other hand, if $G(S)$ is the disjoint union of two subgraphs
$G_1$ and $G_2$, the subspaces of $\F_\ell^{2g}$ spanned by the
vertices of $G_1$ and $G_2$ must be mutually orthogonal, so in
particular the union of these vector spaces contains at most
$(\ell^{2g-1}-1)/(\ell-1)$ lines.  In either case, the number of
transvections contained in $S$ is at most $\ell^{2g-1}-1$.
\end{proof}

Now we deduce the following:

\begin{prop}
\label{pr:main}
Let $k$ be a field finitely generated over a number field, let $C/k$
be a smooth projective curve, and let $A/k(C)$ be a
principally-polarized abelian $g$-fold.  Suppose $\ell\geq 3$ is a
prime such that $A$ has big monodromy modulo $\ell$ and that $A$
degenerates simply at
$$
\Bigl\lceil \frac{2(\ell^{2g}-1)}{(\ell^g-\ell^{g-1})^2}\Bigr\rceil
$$ 
or more places.  Then $A_v$ has big monodromy modulo $\ell$ for all
but finitely many $v\in C(k)$.
\end{prop}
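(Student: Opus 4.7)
The plan is to follow the pattern of Proposition~\ref{pr:general}. Let $X/k$ be the smooth projective curve with function field $k(C)(A[\ell])$, so that $X\to C$ is generically Galois with group $G\supseteq\Gamma_0=\Sp(2g,\F_\ell)$ by the big monodromy hypothesis. For each $v\in C(k)$ with $G_v\not\supseteq\Gamma_0$, a choice of lift $w\in X$ gives a $k$-rational point of the quotient curve $X/G_v$; the set of such bad $v$ therefore injects into the finite union
\[
\bigcup_{\substack{H\leq G\\\Gamma_0\not\subseteq H}}(X/H)(k).
\]
By Theorem~\ref{thm::mordell} with $g_1(k)=2$, it will be enough to show that $X/H$ has geometric genus at least $3$ for every such $H$. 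Passing to $\bar k$, where by big monodromy $X_{\bar k}\to C_{\bar k}$ is $\Gamma_0$-Galois, the task reduces to showing $g(X_{\bar k}/H')\geq 3$ for every proper subgroup $H':=H\cap\Gamma_0<\Gamma_0$.

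The next step is Riemann--Hurwitz for $X_{\bar k}/H'\to C_{\bar k}$, which gives
\[
2g(X_{\bar k}/H')-2\;\geq\;-2[\Gamma_0:H']+\deg R,
\]
so it suffices to force $\deg R\geq 2[\Gamma_0:H']+4$. The required ramification will come from the simple degenerations: by Lemma~\ref{lm:degener}, inertia at each $v_i$ is generated by a transvection $\tau_i\in\Gamma_0$, so the geometric points of $X_{\bar k}/H'$ above $v_i$ correspond to $H'$-orbits on $\Gamma_0/\langle\tau_i\rangle$, each with ramification index $1$ or $\ell$ according as the inertia $g\langle\tau_i\rangle g^{-1}$ is or is not contained in $H'$. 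An elementary orbit count expresses the number $n_{1,i}$ of unramified orbits as $|C_{\Gamma_0}(\tau_i)|\cdot|H'\cap\tau_i^{\Gamma_0}|/|H'|$, so the contribution of $v_i$ to $\deg R$ equals $(\ell-1)\bigl([\Gamma_0:H']-n_{1,i}\bigr)/\ell$.

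To finish, two inputs are combined. First, every transvection in $\Gamma_0$ has centralizer of order $2|\Gamma_0|/(\ell^{2g}-1)$. Second, by Lemma~\ref{le:tr}, $|H'\cap\tau_i^{\Gamma_0}|$ is bounded above by the number of transvections contained in the proper subgroup $H'$, which is at most $\ell^{2g-1}-1$. Summing the per-$v_i$ contribution over the $N$ simple degenerations and substituting these estimates produces a lower bound for $\deg R$ of the shape $N\cdot(\text{factor})\cdot[\Gamma_0:H']$. A short arithmetic verification then shows that the numerical hypothesis $N\geq\lceil 2(\ell^{2g}-1)/(\ell^g-\ell^{g-1})^2\rceil$ is precisely what is needed to force $\deg R\geq 2[\Gamma_0:H']+4$ uniformly over all proper $H'<\Gamma_0$.

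\textbf{The main obstacle} is the arithmetic bookkeeping: the lower bound for $\deg R$ must be verified uniformly in $H'$, and in particular must survive the worst case, in which $[\Gamma_0:H']$ is as small as possible (e.g.\ when $H'$ is a maximal parabolic or the normalizer of a Sylow $\ell$-subgroup) and $|H'\cap\tau_i^{\Gamma_0}|$ saturates the bound $\ell^{2g-1}-1$ of Lemma~\ref{le:tr} with all $\tau_i$ concentrated in a single $\Gamma_0$-conjugacy class. The specific shape of the threshold $\lceil 2(\ell^{2g}-1)/(\ell^g-\ell^{g-1})^2\rceil$ is tailored precisely to this worst case.
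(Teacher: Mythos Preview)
Your plan matches the paper's proof: reduce via Theorem~\ref{thm::mordell} (with $g_1(k)=2$) to a genus lower bound for $X/H'$ with $H'<\Gamma_0$ proper, obtain transvection inertia at the simply degenerate places from Lemma~\ref{lm:degener}, bound the proportion of fixed cosets of $H'$ under $\tau_i$ using Lemma~\ref{le:tr}, and feed the resulting ramification into Riemann--Hurwitz. The paper records the fixed-coset fraction as at most $(\ell^{2g-1}-1)/(\ell^{2g}-1)$ and then checks that the stated threshold on the number of simple degenerations makes $2g(Y)-2>0$.

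One point in your ``short arithmetic verification'' needs care, and the paper glosses over it as well. For odd $\ell$ the transvections in $\Gamma_0=\Sp(2g,\F_\ell)$ split into two conjugacy classes of size $(\ell^{2g}-1)/2$ each; this is consistent with your centralizer order $2|\Gamma_0|/(\ell^{2g}-1)$. But if you then bound $|H'\cap\tau_i^{\Gamma_0}|$ by the total number $\ell^{2g-1}-1$ of transvections in $H'$ from Lemma~\ref{le:tr}, your formula gives a fixed-coset fraction at most $2(\ell^{2g-1}-1)/(\ell^{2g}-1)$, twice the needed bound, and the Riemann--Hurwitz inequality then demands roughly twice the stated number of degenerations. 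The remedy is to observe that the transvections supported on a fixed line form a subgroup of $\Gamma_0$ isomorphic to $(\F_\ell,+)$, so $H'$ contains either none or all $\ell-1$ of them; since these $\ell-1$ are split evenly between the two $\Gamma_0$-classes, one actually has $|H'\cap\tau_i^{\Gamma_0}|\leq(\ell^{2g-1}-1)/2$, and the factors of $2$ cancel to give the ratio $(\ell^{2g-1}-1)/(\ell^{2g}-1)$ used in the paper.
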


\begin{proof} 
  We can assume that the places where $A$ degenerates simply are in
  $C(k)$, because the conclusion will even be stronger after extending
  scalars to a field of definition of those places.  Then, let again
  $X/k$ be the smooth curve with function field $k(C)(A[\ell])$.  The
  map of curves $X\to C$ is generically Galois with group $G$
  contained in $\Gamma$.  Again, we use Theorem~\ref{thm::mordell},
  applied to the curves $X/H$ as $H$ ranges over proper subgroups of
  $\Gamma_0$.  As in the proof of Proposition~\ref{pr:general}, and
  because $g_1(k)=2$ now, it suffices to show that all such $X/H$ have
  genus at least $2$.
\par
Fix a proper subgroup $H<\Gamma_0$ and let $Y/k$ be the quotient curve
$X/H$.  Suppse $v$ is a point where $A$ degenerates simply and let
$\tau\in I_v$ be a generator.  There is an action of $\tau$ on the
sheets of $Y\times_k k^s$ which is exactly the permutation action on
the cosets of $\Gamma_0/H$: the orbits correspond to the points of
$Y\times_k k^s$ over $v$ and the size of an orbit is the ramification
index.  Every orbit has 1 or $\ell$ elements and the coset $gH$ is
fixed by $\tau$ if and only if $g^{-1}\tau g$ lies in $H$.  In
particular, the computation of the ramification of $Y \to C$ at $v$ is
reduced to a problem about the conjugates of transvections in $G$.

By Lemma~\ref{le:tr}, we have
$$
\frac{|\tau^G\cap H|}{|\tau^G|}\leq
\frac{\ell^{2g-1}-1}{\ell^{2g}-1},
$$
so there are at least 
$$
\frac{\ell^{2g-2}(\ell-1)}{\ell^{2g}-1}[G:H]
$$
points of $Y\times_k k^s$ over $v$ of ramification degree $\ell$.
Therefore, if we write $m$ for the number of $v$ in $C(k)$ where $A$
degenerates simply, then from the Riemann-Hurwitz formula we have that
$$
2g(Y) - 2 \geq [G:H]\Bigl(
\frac{m\ell^{2g-2}(\ell-1)^2}{\ell^{2g}-1} +
2g(C) - 2\Bigr).
$$
\par
In particular, the right hand side is positive since
$m(\ell^g-\ell^{g-1})^2> 2(\ell^{2g}-1)$, hence $Y=X/H$ has genus at
least two.
\end{proof}

\begin{exmp}\label{ex:main}
When $A$ is the Jacobian of 
$$
y^2 = f(x)(x-t)
$$
with $\deg(f) = 2g$, we observe that, for $\ell\geq 3$, $A$
degenerates simply at every prime $v$ in $k(t)$ corresponding to the
specialization of $t$ to a root of $f(x)$.  A priori, one could apply
the description of the monodromy of $A$ about $v$ given in
\cite[Section 5]{hall:maximal} to deduce that it is a transvection,
which is why we want it to be simply degenerate (see
Lemma~\ref{lm:degener}), but one can also perform a geometric
computation to check this directly.
\par  
The fact that $A_v$ is the extension of an abelian variety by a
one-dimensional torus, for instance, from~\cite[\S 9.2, Example
8]{blr}.  The key point is that the fiber of the curve over $v$ is
smooth away from a single ordinary double point.
\par 
To compute the order of the component group of $A_v$, one must compute
the minimal regular model of the curve over $v$, which a
straightforward calculation reveals to be the union of curve $C_1$ of
genus $g-1$ and a curve $C_2$ of genus $0$ (Remark IV.7.7 and Example
IV.7.7.1 of \cite{silverman2} give a nice concrete treatment of the
blowing-up process required for this computation).  Moreover, $C_1$
and $C_2$ intersect in two points, from which it follows that one has
the divisor intersection numbers $C_1^2=C_2^2=-2$ and $C_1\cdot C_2=2$
(cf.~\cite[Proposition IV.8.1]{silverman2}).  Using this information
one applies \cite[\S 9.6, Theorem 1]{blr} to deduce that the component
group of $A_v$ is isomorphic to $\Z/2\Z$.
\par 
So when $g \geq 2$, we immediately recover Theorem~\ref{th:main} using
Proposition~\ref{pr:main}.  (The case $g=1$ is standard; see for
instance \cite{cojo:cojohall}.)
\end{exmp}

\section{Methods from analytic number theory}
\label{sec:analytic}

The analytic approach to our problem is based on the conjunction of
two sieves: the sieve for Frobenius of the last-named author
(see~\cite{kow:ls}), which is a version of the large sieve, and a
generalisation of Gallagher's larger sieve~\cite{gallagher}.  The
prototype of this approach was described in~\cite[Prop.  6.3]{kow:ls},
which used a standard large sieve instead of the larger sieve. The
latter is much more efficient here.
% When
% uniformity in $g$ is the most pressing concern, the large sieve is
% again better, and we will illustrate this with
% Theorem~\ref{th:smallest}.
\par
This combination of two sieves is quite appealing, and it may be of
interest in other applications. Although we do not know of any previous
use of the large sieve to set up a larger sieve, the second-named
author has, in earlier work, used the larger sieve to prepare for
application of the large sieve (see~\cite{elsholtz}).
\par
The sieve arises because, instead of the ``big monodromy'' argument
in Proposition~\ref{le:gs}, we will detect non-simple abelian varieties
by means of the following alternate criterion:

\begin{prop}\label{pr:alternate}
  Let $k$ be a number field and $A/k$ be an abelian variety. Let
  $\ideal{p}\subset \Z_k$ be a prime ideal of $k$ with residue field
  $\F_{\ideal{p}}$ such that $A$ has good reduction at $\ideal{p}$. If
  the abelian variety $A_{\ideal{p}}/\F_{\ideal{p}}$ obtained by
  reduction of $A$ modulo $\ideal{p}$ is geometrically simple, then so
  is $A$.
\end{prop}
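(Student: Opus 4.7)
The plan is to argue by contrapositive: assume $A$ is \emph{not} geometrically simple and show that $A_{\ideal{p}}$ fails to be geometrically simple either.

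First I would use the hypothesis to produce an honest isogeny decomposition over a finite extension. If $A$ is not geometrically simple, then by definition $A_{\bar k}$ is isogenous to a product $B_1'\times B_2'$ with $\dim B_i'\geq 1$. Since every endomorphism, isogeny, and abelian subvariety of $A_{\bar k}$ is already defined over a finite extension of $k$, there exist a finite extension $k'/k$, abelian varieties $B_1, B_2/k'$ of positive dimension, and an isogeny $\varphi : A_{k'}\to B_1\times B_2$ defined over $k'$.

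Next I would pass to the reductions. Pick any prime $\ideal{p}'$ of $k'$ above $\ideal{p}$; since $A$ has good reduction at $\ideal{p}$, the base change $A_{k'}$ has good reduction at $\ideal{p}'$ (good reduction is stable under arbitrary extensions of the base DVR). The criterion of N\'eron--Ogg--Shafarevich then transports via the isogeny $\varphi$: the $\ell$-adic Tate module of $B_1\times B_2$ is isomorphic to that of $A_{k'}$ as a Galois module up to finite kernel, so the inertia at $\ideal{p}'$ acts trivially on $T_\ell(B_i)$ for $\ell$ coprime to the residue characteristic, and hence $B_1$ and $B_2$ both have good reduction at $\ideal{p}'$. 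Let $\mathcal{A}$, $\mathcal{B}_i$ denote the corresponding abelian schemes over $\mathcal{O}_{k',\ideal{p}'}$.

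Now I would extend $\varphi$ and reduce. Because abelian schemes over a Dedekind base are N\'eron models of their generic fibers, the N\'eron mapping property extends $\varphi$ uniquely to a homomorphism $\tilde\varphi : \mathcal{A}\to \mathcal{B}_1\times\mathcal{B}_2$ of abelian schemes, whose generic fiber is $\varphi$ and hence is surjective with finite kernel; by properness and fiberwise dimension count the special fiber is then also an isogeny $A_{\ideal{p}'}\to B_{1,\ideal{p}'}\times B_{2,\ideal{p}'}$ with the $B_{i,\ideal{p}'}$ of positive dimension. Thus $A_{\ideal{p}'}$ is not geometrically simple. Finally, since $\bar{\F}_{\ideal{p}'}=\bar{\F}_{\ideal{p}}$ and $A_{\ideal{p}'}=A_{\ideal{p}}\otimes_{\F_{\ideal{p}}}\F_{\ideal{p}'}$, the variety $A_{\ideal{p}}$ itself fails to be geometrically simple, contradicting the hypothesis.

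The only delicate point is the extension of $\varphi$ to an isogeny of abelian schemes; everything else is essentially bookkeeping. One could also bypass this by noting that the existence of the nontrivial idempotent $\pi\in\End_{\bar k}(A)\otimes\Q$ projecting onto $B_1$ reduces, via the specialization map $\End_{\bar k}(A)\hookrightarrow \End_{\bar{\F}_{\ideal p}}(A_{\ideal p})$ (injective on abelian varieties with good reduction, cf.\ \cite{conr:langneron}), to a nontrivial idempotent in $\End_{\bar{\F}_{\ideal p}}(A_{\ideal p})\otimes\Q$, which again forces $A_{\ideal p}$ to split up to isogeny.
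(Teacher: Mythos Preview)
Your argument is correct and follows essentially the same route as the paper: argue by contrapositive, pass to a finite extension $k'/k$ over which the isogeny splitting $A_{k'}\to B_1\times B_2$ is defined, and then reduce modulo a prime above $\ideal{p}$ to obtain a nontrivial splitting of $A_{\ideal{p}}$ over the residue extension. The paper's proof is terser, simply asserting that the factors have good reduction and that the isogeny reduces; you have filled in the justifications (N\'eron--Ogg--Shafarevich for good reduction of the $B_i$, the N\'eron mapping property to extend $\varphi$), and your alternative via the specialization map on endomorphism rings is also a standard and valid way to conclude.
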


\begin{proof}
  This is a tautology, given the theory of reductions of abelian
  varieties: if $A$ is not geometrically simple, there exists an
  isogeny
$$
A\simeq A_1\times A_2
$$
with $\dim A_1$, $\dim A_2\geq 1$, which is defined over some finite
Galois extension $k'/k$. The factors $A_1$ and $A_2$ have good
reduction at $\ideal{p}$, and so, after reducing, we obtain a
corresponding non-trivial factorization for $A_{\ideal{p}}$ defined
over the residue extension of $k'/k$ at $\ideal{p}$.
\end{proof}

\begin{rem}
  It is well-known that there exist integral polynomials which are
  irreducible over $\Q$ but which are reducible modulo every prime
  (this is due to Hilbert; see, e.g.,~\cite{brandl}, where it is shown
  that such polynomials exist of every non-prime degree).  Similarly,
  there are examples of geometrically simple abelian varieties defined
  over a number field which are not geometrically simple modulo any
  prime (see the review~\cite{fisher} by R. Fisher of a paper by C.
  Adimoolam, and the results of Hashimoto and
  Murabayashi~\cite{hm}). It would be interesting to know if the
  analogue of the finiteness statement~(\ref{eq:bound-geometric})
  holds for the set $S'(B)$ of parameters of height $\leq B$ for which
  $A_t$ is not simple modulo all primes.
% (Note that having endomorphisms over $\bar{ak}$ equal to
%   $\Z$ is not a-priori a sufficient condition
  % However, this should not happen\ldots {\bf CJH: }{\em In order to
  %   have big monodromy it's not enough to have absolute endomorphism
  %   ring $\Z$, as discussed in the introduction of my open-image
  %   paper.}  for an abelian variety with endomorphism ring (over
  % $\bar{k}$) equal to $\Z$ because we then expect that $A$ has big
  % monodromy modulo all primes large enough, and one can then
  % apply~\cite[Cor. 6.9]{chavdarov}). This means that the method of
  % proof of the finiteness statement~(\ref{eq:bound-geometric}),
  % through Proposition~\ref{le:gs}, indicates that its analogue
  % should
  % also hold for the set $S'(B)$ of parameters of height $\leq B$ for
  % which $A_t$ is not simple modulo all primes.
\end{rem}

Sieve methods, in particular the large sieve, will be used to detect
factorizations of abelian varieties over finite fields (much as they
can be used to detect irreducible polynomials), and thus we will
proceed by applying Proposition~\ref{pr:alternate} at many different
primes.

We first give a new formulation of Gallagher's sieve in number fields
(the works of Hinz~\cite{hinz} and Goldberg~\cite{goldberg} have other
versions, as does a work in progress of D. Zywina). The terminology
``larger sieve'' arises because this statement is most efficient when
trying to control the size of a set which does not intersect a very
large number of residue classes modulo a set of primes.

\begin{prop}\label{pr:larger}
  Let $k/\Q$ be a number field, let $B>0$ be a constant, and let
  $\mathcal{A}$ be a finite set of elements of $k$ such that $H(a)\leq
  B$ for all $a\in\mathcal{A}$, where $H$ denotes the height in $k$,
  normalized as described below.
  \par
  Let $S$ be a finite set of prime ideals in the ring of integers
  $\Z_k$. If the order of the image of $\mathcal{A}$ under the
  reduction map $k\rightarrow \P^1(\F_{\ideal{p}})$ is $\leq
  \nu(\ideal{p})$ for all $\ideal{p}\in S$, then we have
$$
|\mathcal{A}|
\leq
\frac{\displaystyle{
\sum_{\ideal{p}\in S}\log N\ideal{p}-\log (2^{[k:\Q]}B^2)}
}
{\displaystyle{\sum_{\ideal{p}\in S}
{\frac{\log N\ideal{p}}{\nu(\ideal{p})}}-
\log B -\log(2^{[k:\Q]}B^2)}},
$$
\emph{provided} the denominator in either of these two expressions is
positive.
\end{prop}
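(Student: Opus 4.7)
The plan is to emulate Gallagher's classical larger sieve argument in the number-field setting by double-counting pairs $(a,a')\in\mathcal{A}\times\mathcal{A}$ with $a\neq a'$ whose reductions $\bar a,\bar a'$ coincide in $\P^1(\F_\ideal{p})$, weighted by $\log N\ideal{p}$ for $\ideal{p}\in S$. One side of the count is combinatorial, via Cauchy--Schwarz on residue-class multiplicities; the other is arithmetic, via the product formula applied to the ``projective resultant'' of two elements of $k$.

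For the combinatorial lower bound, fix $\ideal{p}\in S$ and set $r_\xi=|\{a\in\mathcal{A} : \bar a=\xi\}|$ for $\xi\in\P^1(\F_\ideal{p})$. At most $\nu(\ideal{p})$ of the $r_\xi$ are non-zero, and $\sum_\xi r_\xi=|\mathcal{A}|$, so Cauchy--Schwarz yields $\sum_\xi r_\xi^2\geq|\mathcal{A}|^2/\nu(\ideal{p})$; subtracting the $|\mathcal{A}|$ diagonal terms gives
$$
\#\{(a,a')\in\mathcal{A}^2 : a\neq a',\ \bar a=\bar a'\}\geq \frac{|\mathcal{A}|^2}{\nu(\ideal{p})}-|\mathcal{A}|.
$$

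The arithmetic upper bound is the heart of the proof. For a fixed pair $(a,a')\in\mathcal{A}^2$ with $a\neq a'$, set $T(a,a')=\sum_{\ideal{p}\in S,\ \bar a=\bar a'}\log N\ideal{p}$. Choosing integral projective coordinates $a=(a_0:a_1)$ and $a'=(a'_0:a'_1)$ in $\Z_k^2$ which are primitive at each $\ideal{p}\in S$, the condition $\bar a=\bar a'$ is equivalent to $\ideal{p}\mid\Delta:=a_0a'_1-a_1a'_0$, a non-zero element of $\Z_k$. Applying the product formula to $\Delta$, combined with the triangle inequality at each of the archimedean places (producing the factor $2^{[k:\Q]}$) and the height bounds $H(a),H(a')\leq B$, yields an estimate of shape $T(a,a')\leq\log(2^{[k:\Q]}B^2)$; an additional factor of $B$ enters when one uniformly treats elements reducing to $\infty\in\P^1(\F_\ideal{p})$ and the ambiguity of integral projective representatives, and this is the source of the extra $-\log B$ in the denominator of the statement.

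To conclude, multiply the lower bound above by $\log N\ideal{p}$ and sum over $\ideal{p}\in S$; by Fubini the resulting double sum equals $\sum_{(a,a'),\,a\neq a'} T(a,a')$, which is bounded above by $|\mathcal{A}|(|\mathcal{A}|-1)$ times the arithmetic bound. Dividing through by $|\mathcal{A}|$ and collecting the $\log N\ideal{p}/\nu(\ideal{p})$ terms on the left and the $\log(2^{[k:\Q]}B^2)$-type terms on the right produces the claimed inequality, provided the denominator is positive so the final linear rearrangement in $|\mathcal{A}|$ is valid. The main obstacle is the arithmetic step: over $\Q$ the bound on $T(a,a')$ is the trivial estimate $|a-a'|\leq 2B$, but in a general number field it must be coaxed out of the product formula with careful separation of archimedean and non-archimedean contributions, a deliberate choice of integral projective representatives, and uniform handling of elements whose reduction lies at infinity. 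This is where the precise shape of the constants $2^{[k:\Q]}B^2$ and the extra $\log B$ is pinned down.
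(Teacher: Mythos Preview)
Your proposal follows the same Gallagher-type double-counting strategy as the paper, and the combinatorial (Cauchy--Schwarz) step is identical. The arithmetic step, however, is implemented differently and more cleanly in the paper. You propose working with integral projective representatives $(a_0:a_1)$ and the resultant $a_0a'_1-a_1a'_0$, then invoking the product formula; this can be made to work but is delicate over a general number field (class-number obstructions to globally primitive representatives, and controlling archimedean sizes of the chosen $a_i$ in terms of $H(a)$). The paper bypasses all of this by working directly with the height of the \emph{difference}: one sets $\Delta=\prod_{a\neq b}H(a-b)$, bounds it above by $(2^{[k:\Q]}B^2)^{|\mathcal{A}|(|\mathcal{A}|-1)}$ via $H(a-b)\leq 2^{[k:\Q]}H(a)H(b)$, and bounds it below using $H(a-b)=H((a-b)^{-1})\geq\prod_{\ideal{p}\in S_{a,b}}(N\ideal{p})^{v_{\ideal{p}}(a-b)}$, where $S_{a,b}\subset S$ is the set of primes at which both $a$ and $b$ are integral. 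The extra $-\log B$ in the denominator then has a completely explicit source: it bounds the correction term $L_2=\sum_{a\neq b}\sum_{\ideal{p}\in S,\ a\equiv b\equiv\infty}\log N\ideal{p}$, via the trivial estimate $\sum_{\ideal{p}:\,a\equiv\infty}\log N\ideal{p}\leq\log H(a)\leq\log B$. Your attribution of this term to ``the ambiguity of integral projective representatives'' is not the actual mechanism; in the paper it is purely the cost of pairs that both land at $\infty$, and no projective coordinates are ever chosen.
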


\begin{rem}
For many applications, the weaker estimate
\begin{equation}\label{eq-larger}
|\mathcal{A}|\leq
\frac{\displaystyle{
\sum_{\ideal{p}\in S}\log N\ideal{p}}
}
{\displaystyle{\sum_{\ideal{p}\in S}
{\frac{\log N\ideal{p}}{\nu(\ideal{p})}}-2\log(2^{[k:\Q]}B^2)}},
\end{equation}
also valid when the denominator is positive, is sufficient. Indeed,
this is what we will use. 
\end{rem}

We  indicate which definition of the height we consider, since
there are competing normalizations; we
follow~\cite[VIII.5]{silverman}, i.e., our $H$ is the same as
Silverman's $H_k$. Thus let $M_k$ be the set of places of $k$, defined
as in~\cite[VIII.5, p. 206]{silverman} (the set of absolute values on
$k^{\times}$, which coincide with the standard absolute values on $\Q$
when restricted to $\Q^{\times}$), and let $|\cdot|_v$ denote the
absolute value associated with $v\in M_k$.
\par
% , normalized so that the888888
% product formula
% $$
% \prod_{v\in M_k}{|a|_v}=1
% $$
% holds for all $a\in k^{\times}$.
For $a\in k$, the height of $a$ is defined by
$$
H(a)=\prod_{v\in M_k}{\max(1,|a|_v^{n_v})}
$$
where $n_v$ is the local degree at $v$, i.e., $n_v=[k_v:\Q_v]$, where
$k_v$ and $\Q_v$ are the completions of $k$ (resp. $\Q$) with respect
to the metric defined by $\|\cdot\|_v$ (in particular $n_v=2$ if $v$
is a complex place).
\par
We will need the following easy and well-known results:
\begin{equation}
\label{eq:height}
H(a)=H(a^{-1})\quad\quad
H(ab)\leq H(a)H(b)\quad\quad
H(a+b)\leq 2^{[k:\Q]}H(a)H(b)
\end{equation}
for all $a$, $b\in k^{\times}$. We also recall that if $v\in M_k$ is a
non-archimedean place, associated with a prime ideal $\ideal{p}$, then
we have
\begin{equation}\label{eq-padic-height}
|a|_{v}^{n_v}=(N\ideal{p})^{-v_{\ideal{p}}(a)},
\end{equation}
where $v_{\ideal{p}}$ is the $\ideal{p}$-adic valuation and
$N\ideal{p}=|\F_{\ideal{p}}|=|\Z_k/\ideal{p}\Z_k|$ is the order of
the residue field. 
\par
We also comment briefly on the reduction map $k\ra
\P^1(\F_{\ideal{p}})$: if $a\in k$ and $v_{\ideal{p}}(a)<0$ (i.e., if
$\ideal{p}$ ``divides the denominator'' of $a$), then the image of $a$
modulo $\ideal{p}$ is the point at infinity (denoted $\infty$) in
$\P^1(\F_{\ideal{p}})$. We write simply $a\equiv
\infty\mods{\ideal{p}}$ to indicate that this is the case.

\begin{proof}[Proof of Proposition~\ref{pr:larger}]
  The proof is very similar to the original argument of
  Gallagher~\cite{gallagher}. Let
$$
\Delta=\prod_{\stacksum{a,b\in \mathcal{A}}{a\not=b}}{H(a-b)}
$$
which is real number $\geq 1$. We will compare upper and lower bounds
for $\Delta$ to obtain the larger sieve inequality.
By~(\ref{eq:height}), we first have the easy upper bound
\begin{equation}\label{eq-1}
\Delta\leq (2^{[k:\Q]}B^2)^{|\mathcal{A}|(|\mathcal{A}|-1)}.
\end{equation}
\par
On the other hand, we bound the height from below as follows:
by~(\ref{eq:height}) again, switching to the inverse to
use~(\ref{eq-padic-height}) with positive valuations, we have
$$
\Delta=
\prod_{a\not=b}{H((a-b)^{-1})}\geq 
\prod_{a\not=b}{
\prod_{\ideal{p}\in S_{a,b}}{
(N\ideal{p})^{v_{\ideal{p}}(a-b)}}}
$$
where
$$
S_{a,b}=\{\ideal{p}\in S\,\mid\, v_{\ideal{p}}(a)\geq 0,
\quad v_{\ideal{p}}(b)\geq 0\}.
$$
\par
It follows that
\begin{align*}
\log\Delta&
\geq \sum_{a\not=b}{
\sum_{\stacksum{\ideal{p}\in S_{a,b}}
{a\equiv b\mods{\ideal{p}}}}
{
(\log N\ideal{p})
}
}\\
&=
\sum_{a\not=b}{
\sum_{\stacksum{\ideal{p}\in S}
{a\equiv b\mods{\ideal{p}}}}
{
(\log N\ideal{p})
}
}
-
\sum_{a\not=b}{
\sum_{\stacksum{\ideal{p}\in S}{a\equiv b\equiv \infty\mods{\ideal{p}}}}
{
(\log N\ideal{p})
}
}\\
&= L_1-L_2,\quad\quad\text{(say)},
\end{align*}
(since if $\ideal{p}\in S-S_{a,b}$, then $a\equiv b\mods{\ideal{p}}$
implies that both $a$ and $b$ reduce to $\infty$).
\par
Now, for all $\ideal{p}\in S$ and $\alpha\in \P^1(\F_{\ideal{p}})$,
define
$$
R_{\ideal{p}}(\alpha)=
|\{a\in\mathcal{A}\,\mid\, a\equiv \alpha\mods{\ideal{p}}\}|.
$$
\par
We obtain
\begin{align*}
L_1&=\sum_{\ideal{p}\in S}{(\log N\ideal{p})}
{
\sum_{\stacksum{a\not=b}{a\equiv b\mods{\ideal{p}}}}{1}
}\\
&=
\sum_{\ideal{p}\in S}{(\log N\ideal{p})
\sum_{\stacksum{a,b\in\mathcal{A}}{a\equiv b\mods{\ideal{p}}}}{1}}
-|\mathcal{A}|\sum_{\ideal{p}\in S}{\log N\ideal{p}}\\
&=
\sum_{\ideal{p}\in S}{(\log N\ideal{p})
\sum_{\alpha\in\P^1(\F_{\ideal{p}})}{R_{\ideal{p}}(\alpha)^2}}
-|\mathcal{A}|\sum_{\ideal{p}\in S}{\log N\ideal{p}}.
\end{align*}
\par
However, by Cauchy-Schwarz, and by definition of $\nu(\ideal{p})$, we
have the familiar lower bound
$$
\sum_{\alpha\in \P^1(\F_{\ideal{p}})}{R_{\ideal{p}}(\alpha)^2}\geq
\frac{\Bigl(\displaystyle{\sum_{\alpha\in
      \P^1(\F_{\ideal{p}})}{R_{\ideal{p}}(\alpha)}}\Bigr)^2}
{\nu(\ideal{p})} =\frac{|\mathcal{A}|^2}{\nu(\ideal{p})},
$$
and therefore we obtain
$$
L_1\geq \sum_{\ideal{p}\in S}{
\Bigl\{
\frac{|\mathcal{A}|^2}{\nu(\ideal{p})}-|\mathcal{A}|
\Bigr\}
\log N\ideal{p}}.
$$
\par
Now we bound $L_2$ from above in order to conclude; we have for all
$a$ and $b$ in $\mathcal{A}$ the rather trivial estimate
$$
\sum_{\stacksum{\ideal{p}\in S}{a\equiv b\equiv \infty\mods{\ideal{p}}}}
{
(\log N\ideal{p})
}\leq 
\sum_{\stacksum{\ideal{p}\in S}{a\equiv \infty\mods{\ideal{p}}}}
{
(\log N\ideal{p})
}\leq
\log H(a)\leq \log B,
$$
and finally by putting things together, we obtain
$$
\sum_{\ideal{p}\in S}\Bigl\{
\frac{|\mathcal{A}|^2}{\nu(\ideal{p})}-|\mathcal{A}|
\Bigr\}
\log N\ideal{p}
-|\mathcal{A}|^2(\log B)
\leq \log H(\Delta)\leq
|\mathcal{A}|(|\mathcal{A}|-1)\log (2^{[k:\Q]}B^2).
$$
\par
Simplifying by $|\mathcal{A}|$ and re-arranging gives the result.
\end{proof}

When applying this proposition, we assume some upper bound on
$\nu(\ideal{p})$, on average over $S$, and estimate the right-hand
side of~(\ref{eq-larger}). In our case, $\nu(\ideal{p})$ will be quite
small (less than $(N\ideal{p})^{1-\delta}$ for some $\delta>0$), so
that if the set $S$ is chosen to be
$$
S=\{\ideal{p}\subset \Z_k\,\mid\, N\ideal{p}\leq x\}
$$
for some parameter $x\geq 2$ (as is typically the case), the first sum
in the denominator grows fairly rapidly as $x$ grows.
\par
The strength of the final estimates stems from this, but in a way
which is rather surprising compared with the large sieve (for
instance): it will come from the fact that one can choose $x$ quite
small to make the denominator positive; then the numerator is also
fairly small, hence so is $\mathcal{A}$, but the actual size of the
denominator is, in fact, of little significance (in other words, it is
not really a ``saving factor''). 
\par
From this sketch, one can guess that the only really delicate issue
that may arise is if one tries to have estimates uniform in terms of
$k$, for then one is led directly to the difficult issue of showing
that there are sufficiently many prime ideals with small norm. 
\par
In order to clarify the mechanism, we define 
\begin{equation}\label{eq-pit}
\pit{k}(x;\delta)=
\min\Bigl\{ t\geq 2\,\mid\,
\sum_{N\ideal{p}\leq t}{(N\ideal{p})^{-1+\delta}}\geq x
\Bigr\},
\quad\quad \text{ for } t\geq 2,\quad 0\leq \delta<1,
\end{equation}
which, intuitively, quantifies the ``convergence to equilibrium'' in
the Prime Ideal Theorem for $k$. Note in particular that
$$
\pit{k}(x;\delta)\geq \min\{n\geq 2\,\mid\, \text{
there is some prime ideal of norm $n$}\},
$$
since \emph{any} sum over primes of smaller norm is zero by
definition. 
\par
If $k$ is considered to be fixed, we can deduce, by summation by parts,
from the Prime Ideal Theorem that
$$
\sum_{N\ideal{p}\leq t}{(N\ideal{p})^{-1+\delta}}=
\frac{t^{\delta}}{
\log t^{\delta}}+O\Bigl(\log \frac{1}{\delta}+
\frac{t^{\delta}}{(\log t^{\delta})^2}\Bigr),
$$
for $\delta>0$ and $t\geq 2$ with $t^{\delta}\geq 2$, where the
implied constant depends on $k$ only. It then follows easily that
\begin{equation}\label{eq-fixed}
\pit{k}(x;\delta)\ll 
(2x\log x)^{1/\delta}
\end{equation}
for $x\geq 2$, where the implied constant depends only on $k$.

\begin{cor}\label{cor:1}
  Let $k/\Q$ be a number field and let $\mathcal{A}$ be a finite set
  of elements of $k$ such that $H(a)\leq B$ for all $a\in\mathcal{A}$,
  and such that, for all prime ideals $\ideal{p}$ in $\Z_k$, the order
  of the image of $\mathcal{A}$ under the reduction map $k\rightarrow
  \P^1(\F_{\ideal{p}})$ is $\leq \nu(\ideal{p})$ where
$$
\nu(\ideal{p})\leq C(N\ideal{p})^{1-\gamma^{-1}}(\log N\ideal{p})
$$
for some constants $C> 0$ and $\gamma\geq 1$.
\par
Then we have
$$
|\mathcal{A}|\leq 
2C[k:\Q]\pit{k}\Bigl(3C\log(2^{[k:\Q]}B^2);\gamma^{-1}\Bigr)
(\log 2^{[k:\Q]}B^2)^{-1}.
$$
\end{cor}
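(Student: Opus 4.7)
The plan is to apply the weaker form of the larger sieve inequality (\ref{eq-larger}) from Proposition \ref{pr:larger} with the set of primes
$$S=\{\ideal{p}\subset\Z_k\,:\,N\ideal{p}\leq x\},$$
where the truncation parameter $x$ is chosen just large enough to render the denominator positive and of controlled size. The arithmetic function $\pit{k}(\,\cdot\,;\gamma^{-1})$ introduced in (\ref{eq-pit}) is precisely tailored to encode the right choice of $x$.

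First I would bound the denominator of (\ref{eq-larger}) from below. Plugging in the hypothesis on $\nu(\ideal{p})$ gives
$$\sum_{\ideal{p}\in S}\frac{\log N\ideal{p}}{\nu(\ideal{p})}\geq\frac{1}{C}\sum_{N\ideal{p}\leq x}(N\ideal{p})^{-1+\gamma^{-1}},$$
and by the very definition of $\pit{k}$, taking $x=\pit{k}(y;\gamma^{-1})$ makes the inner sum at least $y$. Setting $y=3C\log(2^{[k:\Q]}B^2)$ then forces the whole denominator to be at least $3\log(2^{[k:\Q]}B^2)-2\log(2^{[k:\Q]}B^2)=\log(2^{[k:\Q]}B^2)>0$, which is the amount we ultimately want.

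For the numerator I would use a Chebyshev-type estimate extended to number fields: every $\ideal{p}$ with $N\ideal{p}\leq x$ lies above some rational prime $p\leq x$, there are at most $[k:\Q]$ such primes above any given $p$, and $\sum_{\ideal{p}\mid p}\log N\ideal{p}\leq [k:\Q]\log p$ since $\sum_{\ideal{p}\mid p}f_{\ideal{p}}\leq [k:\Q]$. Combined with the Chebyshev bound $\sum_{p\leq x}\log p\ll x$, this produces $\sum_{\ideal{p}\in S}\log N\ideal{p}\ll [k:\Q]x$. Substituting this along with the denominator estimate into (\ref{eq-larger}), and recalling $x=\pit{k}(3C\log(2^{[k:\Q]}B^2);\gamma^{-1})$, yields the claimed inequality (with the $2C[k:\Q]$ prefactor absorbing both the Chebyshev constant and the factor $C$, which is harmless since one may assume $C\geq 1$).

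The proof is really a bookkeeping exercise on top of Proposition \ref{pr:larger} and the definition (\ref{eq-pit}); no genuinely difficult step is involved. The only point requiring care is the calibration of the margin $3C$ in the argument of $\pit{k}$: it must be large enough that, after subtracting the $2\log(2^{[k:\Q]}B^2)$ appearing in the denominator of (\ref{eq-larger}), what remains is still of the order of $\log(2^{[k:\Q]}B^2)$, so that dividing by it produces the final upper bound in the stated form.
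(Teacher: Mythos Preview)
Your proposal is correct and follows essentially the same route as the paper: apply the weaker larger-sieve inequality~(\ref{eq-larger}) with $S=\{\ideal{p}:N\ideal{p}\leq x\}$, choose $x=\pit{k}(3C\log(2^{[k:\Q]}B^2);\gamma^{-1})$ so that the denominator is at least $\log(2^{[k:\Q]}B^2)$, and bound the numerator by a Chebyshev-type estimate giving $\sum_{N\ideal{p}\leq x}\log N\ideal{p}\leq 2[k:\Q]x$. Your treatment of the numerator (via $\sum_{\ideal{p}\mid p}f_{\ideal{p}}\leq[k:\Q]$ and the rational Chebyshev bound) and your remark that the superfluous factor $C$ in the final constant is harmless once $C\geq 1$ are both fine; the paper simply writes $\sum_{N\ideal{p}\leq x}\log N\ideal{p}\leq [k:\Q](\log x)\pi(x)\leq 2[k:\Q]x$ and leaves the rest implicit.
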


\begin{proof}
  Write $\delta=\gamma^{-1}$. Applying Proposition~\ref{pr:larger} (in
  the form of~(\ref{eq-larger})) with $S$ taken to be the set
$$
S=\{\ideal{p}\,\mid\, N\ideal{p}\leq x\}
$$
for some $x\geq 2$ to be determined later, the denominator
of~(\ref{eq-larger}) is
$$
-2\log(2^{[k:\Q]}B^2)
+\sum_{\ideal{p}\in S}
{\frac{\log N\ideal{p}}{\nu(\ideal{p})}}
\geq -2\log (2^{[k:\Q]}B^2)+
C^{-1}\sum_{N\ideal{p}\leq x}{(N\ideal{p})^{-1+\delta}}.
$$
\par
Thus if we take
$$
x=\pit{k}\Bigl(3C\log(2^{[k:\Q]}B^2);\delta\Bigr),
$$
then the definition~(\ref{eq-pit}) shows that the denominator is 
$\geq \log(2^{[k:\Q]}B^2)$.
\par
We bound the numerator, on the other hand, rather wastefully in terms
of $k$:
$$
\sum_{N\ideal{p}\leq x}\log N\ideal{p}
\leq [k:\Q](\log x)\pi(x)\leq 2[k:\Q]x,
$$
(by the Brun-Titchmarsh or Chebychev upper-bound for $\pi(x)$). The
result is then a direct translation of  Proposition~\ref{pr:larger}.
%  that
% $$
% |\mathcal{A}|
% \leq 2[k:\Q]\pit{k}\Bigl(3C\log(2B^2);\delta\Bigr)
% (\log 2B^2)^{-1}.
% $$
\end{proof}

Under various assumptions, one can easily transform this into concrete
results. For simplicity, we do this for a fixed number field; there, 
using~(\ref{eq-fixed}), we obtain:

\begin{cor}\label{cor:2}
  Let $k$ be a fixed number field. With assumption as in
  Corollary~\ref{cor:1}, we have
$$
|\mathcal{A}|\ll (\log 2^{[k:\Q]}B^2)^{\gamma-1}(6C\log (9C\log
2^{[k:\Q]}B))^{\gamma},
$$
for all $B\geq 2$, the implied constant depending only on $k$.
\end{cor}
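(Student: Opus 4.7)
The plan is to chain the bound from Corollary~\ref{cor:1} with the quantitative estimate~(\ref{eq-fixed}) for $\pit{k}(x;\delta)$, which becomes available once the number field $k$ is held fixed. Beyond that single substitution, the proof is bookkeeping of constants and nested logarithms; no substantive obstacle is expected.

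First, I would record the bound from Corollary~\ref{cor:1}:
$$|\mathcal{A}|\leq 2C[k:\Q]\,\pit{k}\bigl(3C\log(2^{[k:\Q]}B^2);\gamma^{-1}\bigr)\,(\log 2^{[k:\Q]}B^2)^{-1}.$$
Then, setting $x=3C\log(2^{[k:\Q]}B^2)$ and $\delta=\gamma^{-1}$ (so that $1/\delta=\gamma$), I would apply~(\ref{eq-fixed}) to obtain $\pit{k}(x;\gamma^{-1})\ll (2x\log x)^{\gamma}$, with implied constant depending only on $k$. The hypothesis $B\geq 2$ guarantees $x\geq 2$, and the auxiliary condition $x^{\delta}\geq 2$ used in~(\ref{eq-fixed}) can be arranged by enlarging the implied constant if necessary.

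Finally I would simplify. Writing $(2x\log x)^{\gamma}=(2x)^{\gamma}(\log x)^{\gamma}$, the factor $(2x)^{\gamma}=(6C\log 2^{[k:\Q]}B^2)^{\gamma}$ combines with the denominator $(\log 2^{[k:\Q]}B^2)^{-1}$ from Corollary~\ref{cor:1} to give $(6C)^{\gamma}(\log 2^{[k:\Q]}B^2)^{\gamma-1}$, producing the exponent $\gamma-1$ advertised in the target. For the residual factor $\log x$, I would use $\log 2^{[k:\Q]}B^2\leq 2\log 2^{[k:\Q]}B$ (valid because $[k:\Q]\geq 1$) to deduce $x\leq 6C\log 2^{[k:\Q]}B$, and hence $(\log x)^{\gamma}\leq (\log(9C\log 2^{[k:\Q]}B))^{\gamma}$. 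Pairing the leftover $(6C)^{\gamma}$ with this factor reassembles exactly $(6C\log(9C\log 2^{[k:\Q]}B))^{\gamma}$. The leading $2[k:\Q]$ and any numerical slack are absorbed into an implied constant depending only on $k$, yielding the stated bound. The only mildly delicate point is lining up the specific constants $6C$ and $9C$ that appear in the target, which is a routine estimate rather than a genuine difficulty.
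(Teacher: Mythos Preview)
Your proposal is correct and follows exactly the paper's approach: the paper's entire proof is the phrase ``using~(\ref{eq-fixed})'', and you have simply unpacked that substitution into Corollary~\ref{cor:1} and tracked the constants. One small slip: the prefactor from Corollary~\ref{cor:1} is $2C[k:\Q]$, not $2[k:\Q]$, so there is a stray factor of $C$ that is not strictly absorbed by an implied constant depending only on $k$; this is a minor imprecision already present in the paper's stated bound (and is harmless in the application, Theorem~\ref{th:analytic}, where $C\ll g^2$ is swallowed by the $\gamma$-th power).
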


\begin{exmp}
For $k=\Q$, using a lower-bound such as
$$
\pi(x)\geq \frac{1}{6}\frac{x}{\log x}
$$
for $x\geq 2$ (which follows, e.g., from~\cite[p. 342]{hardy-wright}),
one gets easily (and rather wastefully) that
$$
\beta_{\Q}(x;\delta)\leq \Bigl(\frac{12x}{\delta}\log
\frac{2x}{\delta}\Bigr)^{1/\delta},
$$
and hence
$$
|\mathcal{A}|\leq 2\Bigl(\frac{36C}{\delta}\Bigr)^{1/\delta}
(\log 2B^2)^{1/\delta-1}
\Bigl(\log \frac{6C}{\delta}\log 2B^2\Bigr)^{1/\delta},
$$
under the assumption of Corollary~\ref{cor:1} for $k=\Q$.
\end{exmp}

% Next, if the Generalized Riemann Hypothesis holds for the Dedekind
% zeta function of $k$, we get:

% \begin{cor}
%   Let $k$ be a fixed number field. Assume that GRH holds for the
%   Dedekind zeta function of $k$. With assumption as in
%   Corollary~\ref{cor:1}, we have
% $$
% |\mathcal{A}|\ll ...
% $$
% for all $B\geq 2$, the implied constant depending only on $\gamma$.
% \end{cor}

% \begin{proof}
%   Let $d_k$ be the discriminant of $k$. It is well-known that, under
%   the assumption stated, we have
% $$
% \pi_k(x)=\li(x)+O(\sqrt{x}(\log d_kx))
% $$
% for $x\geq 2$, with an absolute implied constant, and with
% $$
% \li(x)=\int_2^x{\frac{dt}{\log t}},\quad\text{ for } x\geq 2.
% $$
% \end{proof}

Now we come to the application to the splitting of Jacobians in our
hyperelliptic families. We use the following result, which is itself
proved using a version of the large sieve, to derive assumptions such
as those in Corollary~\ref{cor:1}, involving the type of conditions in
Proposition~\ref{pr:alternate}.

\begin{prop}\label{pr:frobsieve}
  Let $\F_q$ be a finite field with $q$ elements, let $g\geq 1$ be an
  integer and let $f\in \F_q[X]$ be a squarefree polynomial of degree
  $2g$. For $t\in \F_q$, let $A_t$ be the Jacobian of the
  hyperelliptic curve $C_t$ with affine equation
$$
C_t\,:\, y^2=f(x)(x-t).
$$
\par
Then we have
\begin{equation}\label{eq:siftingset}
|\{t\in\F_q\,\mid\, f(t)\not=0\text{ and }
A_t\text{ is not geometrically simple}\}|
\ll g^2q^{1-\gamma^{-1}}(\log q)
\end{equation}
where $\gamma=4g^2+2g+4$ and the implied constant is \emph{absolute}.
\end{prop}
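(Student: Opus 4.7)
The plan is to apply the sieve for Frobenius of the last-named author~\cite{kow:ls} to the family $\{A_t\}_{t\in\F_q,\ f(t)\neq 0}$. The strategy is to isolate a conjugacy-class condition in $\Sp(2g,\F_\ell)$ whose satisfaction at a single sieving prime $\ell$ forces $A_t$ to be geometrically simple, and then to estimate the number of $t$ for which this condition fails at \emph{every} sieving prime in a chosen set.

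The main arithmetic input is the big monodromy theorem of Hall--Yu~\cite{hall:monodromy}: for every prime $\ell\geq 3$ coprime to $\car(\F_q)$, the geometric monodromy of the family $\{A_t\}$ modulo $\ell$ is all of $\Sp(2g,\F_\ell)$. Combined with Deligne's equidistribution theorem, this guarantees that the Frobenius conjugacy classes $\rho_\ell(\Frob_t)$ equidistribute in $\Sp(2g,\F_\ell)$ as $t$ runs over $U(\F_q)=\{t\in\F_q:f(t)\neq 0\}$, with a cohomological error of order $\sqrt{q}$ times a polynomial in $\ell$ and $g$.

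Next, one defines the sieve support $\Omega_\ell^{c}\subset \Sp(2g,\F_\ell)$ to consist of the conjugacy classes of elements with ``generic enough'' characteristic polynomial (for instance, irreducible of Coxeter type, so that any lift to a Tate module satisfies the hypotheses of Proposition~\ref{le:gs}), so that $\rho_\ell(\Frob_t)\in \Omega_\ell^{c}$ for some $\ell$ implies geometric simplicity of $A_t$. Contrapositively, if $A_t$ is not geometrically simple then $\rho_\ell(\Frob_t)\in \Omega_\ell$ for every $\ell$. A standard orbit calculation in $\Sp(2g,\F_\ell)$ then gives that $\omega(\ell) := |\Omega_\ell|/|\Sp(2g,\F_\ell)|$ is bounded away from $1$ by a positive constant depending only on $g$, uniformly in $\ell$.

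Finally one invokes the large sieve in the form of Kowalski's sieve for Frobenius, with sieving primes $\ell\leq x$ (avoiding $\car(\F_q)$), yielding an estimate essentially of the shape
$$
|S|\ \ll\ \frac{q + x^{2\dim\Sp(2g)+O(1)}\,\sqrt{q}}{\displaystyle\sum_{\ell\leq x}(1-\omega(\ell))/\omega(\ell)}.
$$
Choosing $x \sim q^{1/\gamma}$ with $\gamma = 2\dim\Sp(2g)+4 = 4g^2+2g+4$ balances the main and error terms and produces the stated bound $g^2 q^{1-1/\gamma}(\log q)$. The hard part will be the density estimate $\omega(\ell)\leq 1-c_g$ together with the uniformity ensuring that the generic condition at $\ell$ really forces \emph{geometric} simplicity (and not merely $\F_q$-simplicity); matching the dimension-dependent Deligne error term against these bad densities is precisely what produces the exponent $\gamma$.
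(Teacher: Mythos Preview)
Your overall architecture---Hall--Yu big monodromy plus Kowalski's sieve for Frobenius, with sieving primes $\ell\leq x\sim q^{1/\gamma}$ and $\gamma=2\dim\Sp(2g)+4=4g^2+2g+4$---is exactly the paper's approach, and your identification of the exponent is correct. The real gap is in the step linking the sieve condition to \emph{geometric} simplicity. You claim one can choose $\Omega_\ell^c\subset\Sp(2g,\F_\ell)$ so that $\rho_\ell(\Frob_t)\in\Omega_\ell^c$ for a \emph{single} $\ell$ already forces $A_t$ to be geometrically simple, invoking Proposition~\ref{le:gs}. But Proposition~\ref{le:gs} requires the mod-$\ell$ monodromy group of $A_t$ to contain all of $\Sp(2g,\F_\ell)$; over a finite field the absolute Galois group is procyclic, generated by Frobenius, so the image is cyclic and that hypothesis can never hold. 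More fundamentally, no condition on $P_t\bmod\ell$ for one $\ell$ can rule out the kind of failure of geometric simplicity coming from a relation $\alpha_i^m=\alpha_j^m$ among the Weil numbers over some extension $\F_{q^m}$: irreducibility of $P_t\bmod\ell$ gives only $\F_q$-simplicity.

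What the paper actually does is sieve (over many $\ell$) for the event that the Galois group $G_t$ of the integral polynomial $P_t$ over $\Q$ is \emph{not} the full Weyl group $W\simeq(\Z/2\Z)^g\rtimes S_g$; this is the quantity bounded by $g^2q^{1-1/\gamma}\log q$ in~\cite{kow:ls,kow:lscup}. The passage from $G_t\simeq W$ to geometric simplicity is then a separate, non-sieve argument: either Chavdarov's lemma~\cite[Lemma~5.3]{chavdarov}, or (for $g\geq 5$) the observation that $G_t\simeq W$ forces the only multiplicative relations among the $\alpha_i$ to be those dictated by the functional equation, which in turn excludes any relation $\alpha_i^m=\alpha_j^m$ with $\alpha_i\neq\bar\alpha_j$. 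Your sketch should replace the appeal to Proposition~\ref{le:gs} with this two-step mechanism: first the large sieve forces $G_t=W$, then $G_t=W$ forbids nontrivial multiplicative relations among the roots, hence geometric non-simplicity.
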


\begin{proof}
  Fix a prime number $\ell\not=p$. For $t\in \F_q$, we let $P_t$
  denote the numerator of the zeta function of $C_t$, which is the
  integral polynomial of degree $2g$ given by
$$
P_t=\det(1-TF\mid H^1(A_t,\Z_{\ell})),
$$
where $H^1(A_t,\Z_{\ell})\simeq H^1(C_t,\Z_{\ell})$ is the first
\'etale cohomology group of $A_t$ or $C_t$ (this is the ``spectral
interpretation'' of the zeros of the zeta function of $C_t$).
\par
Let $G_t$ be the Galois group of the splitting field of $P_t$. We
write $W$ for the group which is the ``generic'' value of $G_t$,
namely the Weyl group of the symplectic group $Sp(2g)$, or more
concretely, the group of order $2^gg!$ consisting of signed permutation matrices in $GL(n,\Z)$.
\par
From the application of the sieve for Frobenius in~\cite[Remark after
Th. 8.13]{kow:lscup}, it follows that
$$
|\{t\in\F_q\,\mid\, f(t)\not=0 \text{ and }
G_t\not\simeq W\}|\ll
g^2q^{1-\gamma^{-1}}(\log q),
$$
where $\gamma=4g^2+2g+4$ and the implied constant is \emph{absolute}
(the earlier result in~\cite[Th. 6.2]{kow:ls} has $\gamma=4g^2+3g+5$
instead, which is virtually indistinguishable; it also misses the
$g^2$ factor, due to a slip in the final step of the estimate).
\par
Precisely, this result trivially implies~(\ref{eq:siftingset}) if
``geometrically simple'' is replaced by ``simple'', since an isogeny
(over $\F_q$) of the type
\begin{equation}\label{eq-factor}
A_t\simeq A_1\times A_2
\end{equation}
with $\dim A_1$, $\dim A_2\geq 1$, implies that
\begin{equation}\label{eq-factor2}
P_t=\det(1-TF\mid H^1(A_1,\Z_{\ell}))\det(1-TF\mid H^1(A_2,\Z_{\ell})),
\end{equation}
where both factors are integral polynomials of degree $\geq 1$, which
can certainly not occur if $P_t$ has Galois group $W$.
\par
To claim the result stated in the geometric context, one must exclude
factorizations as above which hold only over a finite extension of
$\F_q$. For fixed $g$, one can adapt straightforwardly the
corresponding qualitative argument of Chavdarov~\cite[Th. 2.1, Lemma
5.3]{chavdarov}. The dependency on $g$ might be worse than what we
claim when applying this directly, but for $g\geq 5$ (at least), one
can use instead the following elementary argument exploiting the size
of the Galois group.  First, one can show (see~\cite[Prop. 2.4,
(2)]{kow:indep}) that $G_t\simeq W$ and $g\geq 5$ imply that the only
multiplicative relations between zeros of $P_t$ must follow from the
Riemann Hypothesis, i.e., if $(\alpha_1,\ldots, \alpha_{2g})$ are the
inverse roots of $P_t$, we have $\Q\otimes_{\Z} R=T$, where
\begin{gather*}
R=\{(n_i)\in\Z^{2g}\,\mid\, \prod_{i}{\alpha_i^{n_i}}=1\},\\
T=\{(m_i)\in\Q^{2g}\,\mid\, \sum_j{m_j}=0,\text{ and }
m_i=m_j\text{ if } \alpha_i=\bar{\alpha}_j\}.
\end{gather*}
\par
Now if~(\ref{eq-factor}) holds over $\F_{q^m}$, $m\geq 1$, it is easy
to see that there must be a relation $\alpha_j^m=\alpha_k^m$ with
$j\not=k$, and this corresponds to a relation $(n_i)\in R$ with
$n_i=0$ except $n_j=n_k=m$, which is incompatible with the definition
of $T$.
\end{proof}

\begin{rem}
  The uniformity in $g$ is a nice additional feature of the sieve
  method, but it is not necessarily crucial here; the uniformity in
  terms of the characteristic of $\F_q$ is what matters for the later
  use of this proposition.
\par
It is worth noting one common feature of the geometric and analytic
approaches here: the proof of Proposition~\ref{pr:frobsieve} depends
crucially on the same result of J-K. Yu (reproved
in~\cite{hall:monodromy}) concerning the monodromy modulo $\ell$ of
our hyperelliptic families, over finite fields.
\end{rem}

\begin{thm}\label{th:analytic}
  Let $k/\Q$ be a number field, $g\geq 1$ an integer and $f\in k[X]$ a
  squarefree polynomial of degree $2g$. For $t\in k$, not a zero of
  $f$, let $A_t$ be the Jacobian of the hyperelliptic curve with
  affine equation
$$
y^2=f(x)(x-t).
$$
\par
For $B\geq 1$, let
$$
S(B)=\{t\in k\,\mid\, H(t)\leq B\text{ and }
A_t\text{ is not geometrically simple}\}.
$$
\par
Then there exists an absolute constant $D\geq 0$ such that, for $B\geq
2$, we have
$$
|S(B)|\ll (\log 2^{[k:\Q]}B^2)^{\gamma-1}(g^2D\log \log 2^{[k:\Q]}B)^{\gamma},
$$
with $\gamma=4g^2+2g+4$, where the implied constant depends only on
$k$.
\end{thm}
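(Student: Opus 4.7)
The plan is to combine the three analytic tools developed earlier in this section: Proposition~\ref{pr:alternate} converts non-geometric-simplicity of $A_t$ into a pointwise condition on the reductions $A_{t,\ideal{p}}$; Proposition~\ref{pr:frobsieve} bounds, for each residue field, the set of $\bar t$ whose fibre is not geometrically simple; and Corollary~\ref{cor:1} (or its specialization Corollary~\ref{cor:2}) feeds these mod-$\ideal{p}$ estimates into the larger sieve of Proposition~\ref{pr:larger}.

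First I would fix a finite exceptional set $T_0$ of primes of $\Z_k$ containing those of residue characteristic $2$ and those dividing the (nonzero) discriminant and leading coefficient of $f$; for any $\ideal{p}\notin T_0$, $A_t$ has good reduction at $\ideal{p}$ as soon as $v_{\ideal{p}}(t)\geq 0$ and $\bar t$ is not a root of $\bar f\in\F_{\ideal{p}}[X]$. For such $\ideal{p}$ and $t\in S(B)$, Proposition~\ref{pr:alternate} (applied contrapositively) forces $A_{t,\ideal{p}}$ to be non-geometrically-simple; combining this with Proposition~\ref{pr:frobsieve}, and allowing the at-most $2g+1$ extra points (roots of $\bar f$ and $\infty$) at which good reduction may fail, the image of $S(B)$ in $\P^1(\F_{\ideal{p}})$ has cardinality at most
$$
\nu(\ideal{p})\leq C\,g^2\,(N\ideal{p})^{1-\gamma^{-1}}\log N\ideal{p}
$$
for an absolute constant $C$, uniformly in $\ideal{p}\notin T_0$ of sufficiently large norm, where $\gamma=4g^2+2g+4$.

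This is precisely the hypothesis of Corollary~\ref{cor:1}, with the $C$ appearing there replaced by $Cg^2$. Applying Corollary~\ref{cor:2} (i.e., substituting the estimate \eqref{eq-fixed} for $\pit{k}$) yields
$$
|S(B)|\ll(\log 2^{[k:\Q]}B^2)^{\gamma-1}\bigl(6Cg^2\log(9Cg^2\log 2^{[k:\Q]}B)\bigr)^{\gamma},
$$
with implied constant depending only on $k$. Expanding $\log(9Cg^2\log 2^{[k:\Q]}B)=\log(9Cg^2)+\log\log 2^{[k:\Q]}B$, this is bounded by $(\log 2^{[k:\Q]}B^2)^{\gamma-1}(g^2D\log\log 2^{[k:\Q]}B)^{\gamma}$ for an absolute $D$ once $B$ is sufficiently large (in terms of $g$), and the bounded short range is absorbed into the implied constant (which may depend on $k$).

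The main obstacle is the bookkeeping in the second step: I must verify that dropping the finitely many primes in $T_0$, together with those of small norm where the power-saving term in $\nu(\ideal{p})$ does not yet dominate the additive corrections from the roots of $\bar f$ and the point at infinity, does not destroy the positivity of the denominator in Proposition~\ref{pr:larger}; and that the uniformity of Proposition~\ref{pr:frobsieve} in $\ideal{p}$ genuinely survives summing over all $\ideal{p}$ with $N\ideal{p}$ up to the threshold $\pit{k}(3Cg^2\log(2^{[k:\Q]}B^2);\gamma^{-1})$ prescribed by the proof of Corollary~\ref{cor:1}. This is sieve-theoretic housekeeping rather than a conceptual difficulty, but it is the only place where the final shape of the bound, in particular the clean $g^2$-dependence inside the $\gamma$-th power, can be lost if one is careless.
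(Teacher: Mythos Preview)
Your proposal is correct and takes essentially the same approach as the paper: Proposition~\ref{pr:alternate} reduces membership in $S(B)$ to a mod-$\ideal{p}$ condition, Proposition~\ref{pr:frobsieve} bounds the image of $S(B)$ in $\P^1(\F_{\ideal{p}})$ (with the same $2g+1$ correction for roots of $\bar f$ and $\infty$ that you note), and Corollary~\ref{cor:2} finishes. The paper's proof is in fact terser than yours---it does not isolate an exceptional set $T_0$ and simply asserts that Corollary~\ref{cor:2} ``directly implies the result''---so the bookkeeping concern you flag at the end is one the paper itself leaves implicit.
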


\begin{proof}
  The basic observation is that, if $t\in S(B)$ then for any prime
  ideal $\ideal{p}$, $t\mods{p}\in \P^1(\F_{\ideal{p}})$ is either a
  zero of $f$ modulo $\ideal{p}$, or $\infty$, or else ($f(t)$ being
  non-zero modulo $\ideal{p}$ so that $A_t$ has good reduction modulo
  $\ideal{p}$, and its fiber over $\ideal{p}$ then being not
  geometrically simple), $t\mods{\ideal{p}}$ lies in the set
  $\Omega_{\ideal{p}}$ defined by~(\ref{eq:siftingset}) for $f$
  relative to $q=N\ideal{p}$.
\par
Hence the image of $S(B)$ modulo $\ideal{p}$ has cardinality
$\nu(\ideal{p})$ with
$$
\nu(\ideal{p})
\leq 2g+1+|\Omega_{\ideal{p}}|
\ll g^2 (N\ideal{p})^{1-\gamma^{-1}}(\log N\ideal{p}),
$$
where the implied constant is absolute by
Proposition~\ref{pr:frobsieve}. Thus Corollary~\ref{cor:2} directly
implies the result.
\end{proof}

\begin{rem}
  In an extremely narrow range, the large sieve (as used originally
  in~\cite{kow:ls}) is better than the larger sieve.  Indeed, as
  discussed with many examples in~\cite{croot-elsholtz}, the original
  larger sieve is better when the number of permitted residue classes
  (i.e., the size of $\Omega_{\ideal{p}}$, in our case) is smaller
  than half of $N\ideal{p}$ (this is not quite true anymore in our
  inequality because of the term $\log 2^{[k:\Q]}B^2$ in the
  denominator).  Proposition~\ref{pr:frobsieve} clearly shows that we
  can not prove this\footnote{\ It may be true, for all we know.}
  unless $N\ideal{p}$ is (roughly) larger than $\delta^{-1/\delta}$
  (with $\delta\asymp g^2$). But the bound in
  Proposition~\ref{pr:frobsieve} also becomes trivial for $g$ not much
  beyond this point, so the range of applicability where the large
  sieve would be the best is very small.
\end{rem}

\section*{Appendix: survey of abelian varieties for analytic number
  theorists}

While the basic information about abelian varieties that we use will
certainly be well-known to readers more familiar with the methods of
Sections~\ref{sec:geoI} and~\ref{sec:geoII}, this is less likely to be
the case for readers whose interests lie more in the direction of
analytic number theory and sieves. In order to motivate the basic
problem for these readers, we summarize here briefly some background
information, which we hope will suffice to make accessible the
contents of Section~\ref{sec:analytic} for such readers.
\par
The simplest case of abelian varieties is that of elliptic curves;
although our basic question of geometric simplicity is not of interest
in this setting (any elliptic curve is geometrically simple), a basic
knowledge of elliptic curves can help motivate and understand the
general theory. We refer for this to Silverman's
book~\cite{silverman}, and to the summary in~\cite[\S 11.10]{ik},
which may also be helpful.
\par
Let $k$ be a number field (for instance, $k=\Q$). An abelian variety
$A$ defined over $k$ is, first of all, a {\em proper variety} over
$k$; that is, we may think of $A$ as a subset of projective space over
$k$ cut out by some set of homogeneous equations in the coordinates.
(In practice, though, one almost never writes down these equations!)
What makes $A$ an abelian variety is the presence of a {\em group
  law}: a map from $A \times A$ to $A$ which is given by polynomials
in the coordinates, and satisfies the usual group axioms --
associativity, presence of an inverse, and so on.  (One might compare
$A$ with the more familiar example of $\SL_n/k$, which is also
determined as a subset of $k^{n^2}$ by a set of equations, and which
also has a group operation which is polynomial in the matrix entries.
The difference is that $A$ is cut out by equations in projective
space, while $\SL_n$ is cut out by equations in the affine space
$k^{n^2}$.)
\par
Since $k$ is contained in $\C$, we can ask not only about the group of
solutions over $k$ to the defining equations of $A$, but about the set
of complex solutions, denoted $A(\C)$.  Write $g$ for the dimension of
$A$.  It is known that $A$ is necessarily isomorphic to $\C^g/\Lambda$
for some lattice $\Lambda\simeq \Z^{2g}\subset \C^g$; in the
$1$-dimensional case $g=1$, $A$ is an elliptic curve over $k$.
\par
In particular, it follows that the subgroup $A[n]$ of elements of
order dividing $n$ in $A$, for any integer $n\geq 1$, is isomorphic to
$(\Z/n\Z)^{2g}$, and moreover the fact that $A$ is defined over $k$
easily implies that the coordinates of elements in $A[n]$ are
algebraic numbers, which all together generate a finite Galois
extension $k(A[n])$ of $k$.
\par
Algebraic curves provide a natural source of abelian varieties via the
construction of the {\em Jacobian}, which over $\C$ goes back to
Jacobi, and over $k$ to Weil.  To each non-singular algebraic curve
$C/k$ of genus $g$, one can attach a natural abelian variety $J(C)$
over $k$ of dimension $g$.  One nice feature of Jacobians is that they
are {\em principally polarized}: this is a kind of self-duality which
imposes on $J(C)[n]$ a natural perfect pairing
$$
J(C)[n]\times J(C)[n]\ra \text{\boldmath$\mu$}_n\simeq \Z/n\Z
$$
where $\text{\boldmath$\mu$}_n$ denotes the group of $n$-th roots of
unity.
\par
In fact, the action of $\Gal(\bar{k}/k)$ on the coordinates of
$k(A[n])$ is not merely linear, but compatible with the symplectic
pairing above; thus it provides a representation
$$
\Gal(\bar{k}/k)\ra \Aut(A[n])\simeq GSp(2g,\Z/n\Z).
$$
\par
The primary examples of abelian varieties treated in this paper are
Jacobians of curves; in any event, all the abelian varieties we
consider are for simplicity assumed to be principally polarized.
\par
The most delicate issue for Section~\ref{sec:analytic} is that of
reductions of an abelian variety modulo prime ideals of
$\Z_k$. Suffice it to say here that this can be defined for all but
finitely many prime ideals of $k$ (the ``primes of bad reduction''),
and that if concrete equations for $A$ are given so that, modulo
$\ideal{p}$, the resulting equations still define a smooth algebraic
variety, then the reduction coincides pretty much with the na\"{\i}ve
notion of looking at solutions of the equations with coefficients in
extensions of the residue field $\Z_k/\ideal{p}$.
\par
Now our basic problem takes root in the following definition: an
abelian variety $A/k$ is \emph{simple} if and only if there is no
nontrivial abelian variety $B$ over $k$ which is a subvariety of $A$,
except $A$ itself.  It is {\em geometrically simple} if it remains
simple even when considered as an abelian variety over $\C$.
\par
Implicit in the notion of geometric simplicity is that, for most
lattices $\Lambda \in \C^g$, the quotient $\C^g / \Lambda$ is not an
abelian variety.  It is merely a complex torus; the condition that it
embeds as an algebraic subvariety of projective space imposes very
strong restrictions on $\Lambda$ (originally described by Riemann.)
In particular, if $\C^g / \Lambda$ is an abelian variety, it is not
usually possible to find a subspace $V \in \C^g$ such that $\Lambda
\cap V$ is a lattice in $V$ and $V/(\Lambda \cap V)$ is an abelian
variety.  In other words, abelian varieties over $\C$ are
``typically'' simple.
\par
Now the question considered in this paper is essentially the
following: we form a family, parameterized by elements in $k$, of
curves; then we have an associated family of Jacobian varieties, and
we ask: \emph{how frequent is it that those abelian varieties are not
  geometrically simple}? 
\par
The basic approach in Section~\ref{sec:analytic} is founded on the
following fact: if an abelian variety $A/k$ is not geometrically
simple, then its reduction modulo a prime ideal $\ideal{p}$ has the
same property (which is intuitive enough). Moreover, a result going
back in principle to Poincar\'e shows that a non-trivial subvariety
$B\subset A$ is ``essentially'' a direct factor, i.e., we have
$$
A\simeq B\times C
$$
for some other abelian subvariety $C$, up to finite groups (``up to
isogeny''). This is the property~(\ref{eq-factor}) which leads to the
factorization~(\ref{eq-factor2}) which we use to control the occurence
of non-geometrically simple varieties.

\end{document}